    \let\bbordermatrix\bordermatrix
    \patchcmd{\bbordermatrix}{8.75}{4.75}{}{}
    \patchcmd{\bbordermatrix}{\left(}{\left[}{}{}
    \patchcmd{\bbordermatrix}{\right)}{\right]}{}{}
    \newtheorem{theorem}{Theorem}[section]
    \newtheorem*{theorem*}{Theorem}
    \theoremstyle{definition}
    \newtheorem{example}[theorem]{Example}
    \theoremstyle{remark}
    \newtheorem{remark}[theorem]{Remark}
    \newcommand{\RR}{\mathbb{R}}
    \newcommand{\CC}{\mathbb{C}}
    \newcommand{\ZZ}{\mathbb{Z}}
    \newcommand{\KK}{\mathbb{K}}
    \newcommand{\MM}{\mathbb{M}}
    \newcommand{\NN}{\mathbb{N}}
    \newcommand{\FF}{\mathbb{F}}
    \newcommand{\RP}{\mathbb{R}\mathbf{P}}
    \newcommand{\TT}{\mathbb{T}}
    \newcommand{\Ld}{\mathcal{L}}
    \newcommand{\ts}{\textsuperscript}
    \let\phi\varphi
    \DeclareMathOperator{\SW}{SW}
    \DeclareMathOperator{\Supp}{Supp}
\begin{document}
    \title{Twisty Takens: A Geometric Characterization of Good Observations on Dense Trajectories}
    \author{
    Boyan Xu\thanks{Department of Mathematics, University of California at Berkeley, CA, USA.} ,
    Christopher J. Tralie\thanks{Department of Mathematics, Duke University, NC, USA.} ,
    Alice Antia\thanks{Carleton College, MN, USA.} ,
    Michael Lin\thanks{Princeton University, NJ, USA.} ,
    Jose A. Perea\thanks{Department of Computational Mathematics, Science and Engineering \& Department of Mathematics Michigan State University, MI, USA.}
    }
    \maketitle

        \begin{abstract}

        In nonlinear time series analysis and dynamical systems theory, Takens' embedding theorem states that the sliding window embedding of a generic observation along trajectories in a state space, recovers the region traversed by the dynamics. This can be used, for instance, to show that sliding window embeddings of periodic signals recover topological loops, and that sliding window embeddings of quasiperiodic signals recover high-dimensional torii. However, in spite of these motivating examples, Takens' theorem does not in general prescribe how to choose such an observation function given particular dynamics in a state space. In this work, we state conditions on observation functions defined on compact Riemannian manifolds, that lead to successful reconstructions for particular dynamics. We apply our theory and construct families of time series whose sliding window embeddings trace tori, Klein bottles, spheres, and projective planes. This greatly enriches the set of examples of time series  known to concentrate on various shapes via sliding window embeddings, and will hopefully help other researchers in identifying them in naturally occurring phenomena. We also present numerical experiments showing how to recover low dimensional representations of the underlying dynamics on state space, by using the persistent cohomology of sliding window embeddings and Eilenberg-MacLane (i.e., circular and real projective) coordinates.
        \end{abstract}

\section*{Conflict of Interest}

On behalf of all authors, the corresponding author (Christopher J. Tralie) states that there is no conflict of interest. 

    \section*{Acknowledgments}

    C.J. Tralie was partially supported by an NSF big data grant DKA-1447491 and an NSF Research Training Grant NSF-DMS 1045133;
    J.A. Perea was partially supported by the NSF (DMS-1622301) and  DARPA (HR0011-16-2-003);
    all authors were supported by the NSF under Grant No. DMS-1439786 while  in residence at the Institute for Computational and Experimental Research in Mathematics in Providence, RI, during the  Summer at ICERM 2017 program on Topological Data Analysis.

    \section{Introduction}

    The \emph{delay coordinate mapping}, or \emph{sliding window embedding} \cite{takens1981detecting, nolte2010tangled, kantz2004nonlinear, das2017delay},  posits a time series as a sequence of observations made along trajectories in a hidden state space.  Under this scheme, a one dimensional time series, which could otherwise be analyzed with more traditional linear analysis techniques such as ARMA and Fourier/Wavelet analysis, is instead turned into a geometric object via a vector of samples of the time series, which moves along the signal (Equation \ref{eq:slidingwindow}). The shape of this geometric object provides information about the system under study.  Periodic processes, for example, map to points which concentrate on a topological loop.  Sliding window embeddings have been used in this context, for example, to analyze ECG signals of a beating heart \cite{stam2005nonlinear, plesnik2014detection}, to detect chatter in mechanical systems \cite{khasawneh2016chatter}, to quantify repetitive motions in human activities \cite{frank2010activity,venkataraman2016persistent}, to discover periodicity in gene expression during circadian rhythms \cite{perea2015sw1pers}, and to detect wheezing in audio signals \cite{emrani2014real}.  In addition to loops, torus shapes often show up during ``quasiperiodicity,'' which is a state of near-chaos.  Sliding window embeddings have witnessed this torus shape in such applications as vocal fold anomalies \cite{herzel1994analysis}, horse whinnies \cite{briefer2015segregation}, neural networks \cite{morrison2016diversity}, and oscillating cylinder flow \cite{glaz2017quasi}.  Certain time series even concentrate on fractals after a sliding window embedding \cite{takens1981detecting, de2012topological}.  Sliding window embeddings have also been used as a tool for shape analysis more generally even when an underlying model for the dynamics is unknown, such as in music structure analysis \cite{bello2011measuring, serra2009cross}.
    We direct the interested reader to 
    \cite{perea2019topological} for a recent review on how topological data analysis can be used in the analysis of time delay embeddings.

    The main theory motivating the use of sliding window embeddings in all of these applications is Takens' delay embedding \cite{takens1981detecting} theorem, which is stated as follows:

    \begin{theorem*}[Takens' embedding theorem \cite{takens1981detecting}]\label{thm:takens}
        Let $M$ be a compact manifold of dimension $m$. Suppose $X$ is a smooth vector field with flow $\psi_t: M \rightarrow M$ and $G$ is a smooth function on $M$. For $\tau>0$, $N \geq 2m$, and pairs $(X,G)$ it is a generic property that $\Psi^N_\tau: M \rightarrow \RR^{N+1}$ defined by
        \[ \Psi^N_\tau(p) = (G(p), G(\psi_\tau(p)), G(\psi_{2\tau}(p)), \ldots, G(\psi_{N\tau}(p)))\]
        is an embedding.
    \end{theorem*}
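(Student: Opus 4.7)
The proof strategy would follow the standard transversality/genericity approach that Whitney pioneered for his embedding theorem, adapted to the constraints imposed by the dynamics. An embedding of a compact manifold is equivalent to an injective immersion, so the plan is to break the argument into two pieces: showing that for generic $(X,G)$ in a suitable function space (e.g., $C^k$ vector fields and observation functions under the Whitney topology, $k$ large), the map $\Psi^N_\tau$ is (i) an immersion everywhere and (ii) injective. Since $M$ is compact, (i) and (ii) together force $\Psi^N_\tau$ to be a homeomorphism onto its image, and hence an embedding.

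For the immersion step, I would observe that at each $p \in M$ the differential $d\Psi^N_\tau(p)$ is an $(N+1)\times m$ matrix whose rows are $dG_{\psi_{k\tau}(p)} \circ d\psi_{k\tau}(p)$ for $k=0,\dots,N$. Provided the $N+1$ orbit points $p,\psi_\tau(p),\dots,\psi_{N\tau}(p)$ are distinct, one can perturb $G$ independently in small disjoint neighborhoods of each of them to realize an arbitrary $(N+1)$-tuple of values for the rows. The set of rank-deficient $(N+1)\times m$ matrices has codimension $(N-m+2) \geq m+2$ inside the space of all such matrices, which exceeds $\dim M=m$, so a parametric transversality (Thom–Abraham) argument shows that the locus of bad $p$ is empty for a residual set of $(X,G)$.

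For the injectivity step, I would work on $M \times M$ off the diagonal and study the evaluation map $(p,q)\mapsto \Psi^N_\tau(p)-\Psi^N_\tau(q) \in \RR^{N+1}$. Whenever the two orbits $\{\psi_{k\tau}(p)\}$ and $\{\psi_{k\tau}(q)\}$ are disjoint, a similar independent perturbation of $G$ at each of the $2(N+1)$ orbit points makes the difference transverse to $0\in \RR^{N+1}$; since $\dim(M\times M\setminus \Delta)=2m < 2m+1 \leq N+1$, transversality forces the preimage of $0$ to be empty, giving injectivity away from such resonant configurations.

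The main obstacle, and the reason genericity is taken jointly over $(X,G)$ rather than over $G$ alone, is the case where $p$ and $q$ share an orbit or lie on periodic orbits whose period is commensurable with $\tau$. In these configurations the $\psi_{k\tau}$-iterates of $p$ and $q$ overlap, so one cannot perturb $G$ independently at the overlapping points and the naive transversality argument collapses. The remedy is a preliminary perturbation of $X$ (in the spirit of Kupka–Smale) ensuring that up to time $N\tau$ there are only finitely many periodic orbits, their periods are incommensurable with $\tau$, and no closed orbit sits in a degenerate position under the $\psi_\tau$-iteration. Once $X$ is generic in this sense, a separate, more delicate jet-transversality argument at the finitely many offending periodic orbits handles the residual cases, and assembling all three pieces gives a residual set of pairs $(X,G)$ for which $\Psi^N_\tau$ is an embedding.
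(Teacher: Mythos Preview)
The paper does not prove this statement: it is Takens' original theorem, quoted from \cite{takens1981detecting} as background and motivation, not a result established here. The paper's own contribution is the distinct Theorem~\ref{thm:main}, which gives explicit geometric conditions on a \emph{specific} pair $(X,G)$ guaranteeing that some $\Psi_\tau^N$ is an embedding, rather than a genericity statement over the space of all pairs. So there is no ``paper's own proof'' of Takens' theorem to compare your proposal against.

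That said, your sketch is a reasonable outline of the classical argument and identifies the right ingredients: the split into immersion and injectivity, the codimension count for rank-deficient matrices, the transversality argument on $M\times M\setminus\Delta$, and the need for a preliminary Kupka--Smale type perturbation of $X$ to control periodic orbits whose periods resonate with $\tau$. If you want to turn this into a complete proof you should consult Takens' original paper or the subsequent refinements (e.g., Sauer--Yorke--Casdagli), where the handling of periodic orbits of period $\leq N\tau$ is made precise; your description of that step is accurate in spirit but would need the actual jet-transversality bookkeeping spelled out to be a proof rather than a plan.
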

    A ``random'' choice of $X$ and $G$ makes the delay coordinate mapping $\Psi^N_\tau$ a smooth embedding. Thus, remarkably, the state space $M$ of a dynamical system may in general be reconstructed from a single generic observation function $G$\footnote{Some texts refer to this as an ``observable.''}, which gives rise to a 1D time series. However, in practice, Takens' result is ill-suited for computational purposes because it does not provide an explicit characterization of ``genericity''.
    In this work, we extend Takens' embedding theory with a geometric characterization of observations which yield high-dimensional delay coordinate embeddings, given a particular flow on a manifold. Our main theoretical result for general compact manifolds is stated in Theorem \ref{thm:main} in Section~\ref{sec:goodobservations}, as follows:
    
     \begin{theorem*}
    The Takens map $\Psi_{\tau}^{N}$ is an embedding for some dimension $N>0$ and flow time $\tau>0$, if the following conditions hold:
        \begin{enumerate}
        \item For any point of $p\in M$ there is an $m$-tuple $J\in \ZZ_{\geq 0}^m$ of nonnegative integers such that the $m$-form
        \[\Ld_{X}^{\wedge J}dG := \bigwedge_{j\in J}\Ld_X^jdG\]
        is nonzero at some point on the integral curve $\gamma_p(s)$. Here, $\mathcal{L}_X^j$ denotes the $j^{\textnormal{th}}$-order Lie derivative.

        \item For any pair of distinct points $p, q\in M$ the observation curves $g_p(s)$ and $g_q(s)$ are not identical.
        \end{enumerate}
    \end{theorem*}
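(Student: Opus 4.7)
Because $M$ is compact, an injective immersion is automatically an embedding, so the plan is to exhibit $\tau$ small and $N$ large for which $\Psi_{\tau}^{N}$ is both immersive and injective; Condition~(1) will drive the immersion and Condition~(2) the injectivity, with a single $(\tau,N)$ chosen at the end to satisfy both.

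\emph{Immersion.} The rows of $d\Psi_{\tau}^{N}|_{q}$ are the $1$-forms $\beta_{k}(q):=d(G\circ\psi_{k\tau})|_{q}$, and Taylor expansion in $\tau$ together with the commutation $d\mathcal{L}_X^{j}=\mathcal{L}_X^{j}d$ yields
\[
\beta_{k}(q)=\sum_{j=0}^{N}\frac{(k\tau)^{j}}{j!}\,\mathcal{L}_X^{j}dG|_{q}+O(\tau^{N+1}).
\]
The coefficient matrix is an invertible scaled Vandermonde, so for $N$ fixed and $\tau\to 0$ the span of $\{\beta_{k}(q)\}_{k=0}^{N}$ converges to $\operatorname{span}\{\mathcal{L}_X^{j}dG|_{q}\}_{j=0}^{N}$; if this equals $T_{q}^{*}M$ then $d\Psi_\tau^{N}|_{q}$ is injective for small $\tau$. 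Condition~(1) supplies exactly such a $q=\psi_{s_{0}}(p)$ on the integral curve of each $p$, and the $m$-tuple $J=(j_1,\ldots,j_m)$ gives linear independence on a whole open neighborhood $U\ni q$. To transfer immersion back from $q$ to $p$, I observe that the coordinates of $\Psi_{\tau}^{N+k_{0}}(p)$ indexed by $k_0,\ldots,k_0+N$ are exactly $\Psi_{\tau}^{N}(\psi_{k_{0}\tau}(p))$, so immersion of $\Psi_{\tau}^{N}$ at $\psi_{k_{0}\tau}(p)$ forces immersion of $\Psi_{\tau}^{N+k_{0}}$ at $p$. Extracting a finite subcover of $M$ by neighborhoods $V_{p_{i}}=\psi_{-s_{p_{i}}}(U_{p_{i}})$, and choosing integers $k_{i}$ with $k_{i}\tau$ close enough to $s_{p_{i}}$ that $\psi_{k_{i}\tau}(p)\in U_{p_{i}}$ for every $p\in V_{p_{i}}$, yields uniform constants $N$ and $\tau_{0}$ so that $\Psi_{\tau}^{N}$ is an immersion on all of $M$ whenever $\tau\in(0,\tau_{0})$.

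\emph{Injectivity.} Immersion plus compactness gives a uniform local injectivity radius $\delta>0$ via the implicit function theorem, reducing matters to the compact set $K_{\delta}=\{(p,q)\in M\times M:d(p,q)\geq\delta\}$. For each $(p,q)\in K_{\delta}$, Condition~(2) provides some $s_{p,q}$ with $G(\psi_{s_{p,q}}(p))\ne G(\psi_{s_{p,q}}(q))$, and continuity makes this inequality persist on an open neighborhood of $(p,q,s_{p,q})$ in $M\times M\times\RR$. A finite subcover of $K_{\delta}$ produces times $s_{1},\ldots,s_{r}$ and persistence intervals $I_{i}\ni s_{i}$; for $\tau$ small each $I_{i}$ contains some $k_{i}\tau$ with $k_{i}\in\ZZ_{\geq 0}$, and enlarging $N$ past $\max_i k_{i}$ separates every pair in $K_{\delta}$. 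Choosing $\tau$ small and $N$ large enough for both steps completes the proof.

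\emph{Main obstacle.} The delicate part is the immersion step: Condition~(1) only asserts spanning somewhere on the integral curve, and recovering immersion at every $p\in M$ requires simultaneously controlling the Vandermonde limit as $\tau\to 0$ at the shifted point $q$, the integer approximation $k_{i}\tau\approx s_{p_{i}}$, and a uniform finite cover of $M$. The injectivity step is, by comparison, a routine continuity-and-compactness packaging of Condition~(2) on top of the already-established immersion.
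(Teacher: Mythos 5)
Your overall decomposition — immersion from Condition (1) via a Taylor/Vandermonde argument, injectivity from Condition (2) plus compactness, both reconciled at the end — is the same as the paper's. The paper is somewhat more careful in the immersion step (it uses Cauchy--Binet on the $m$-fold wedge, isolates the minimal total degree $n$, and explicitly chooses $I$ so the leading coefficient $C_n$ is dominated by one term), but your "span of $\{\beta_k\}$ converges to span of $\{\mathcal{L}_X^j dG\}$" argument via the scaled Vandermonde is a legitimate alternative packaging, and your transfer of immersion from $\psi_{k_0\tau}(p)$ back to $p$ via the subvector observation is clean and correct.

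However, there is a genuine gap in how you glue the two halves together, and it is not merely a matter of polish. Two things go wrong. First, your statement that the compactness argument ``yields uniform constants $N$ and $\tau_0$ so that $\Psi_\tau^N$ is an immersion on all of $M$ whenever $\tau\in(0,\tau_0)$'' is false with $N$ fixed: the integers $k_i$ you need satisfy $k_i\tau\approx s_{p_i}$, so as $\tau\to 0$ they grow like $s_{p_i}/\tau$, which forces $N$ to grow. The correct statement is that for each sufficiently small $\tau$ there is an $N(\tau)\sim 1/\tau$ making $\Psi_\tau^{N(\tau)}$ an immersion. Second, and consequently, the $\delta$ from your implicit-function-theorem step is a function of the particular pair $(\tau,N)$ you used to get the immersion. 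When you then pass to the injectivity step and need $\tau$ small enough that each persistence interval $I_i$ contains a lattice point $k_i\tau$, you are implicitly shrinking $\tau$ again — and there is no reason the local injectivity radius $\delta$ survives that shrinkage. Nothing in your argument rules out $\delta\to 0$ as $\tau\to 0$, which would collapse the compact set $K_\delta$ onto the diagonal and leave pairs unseparated. So ``choosing $\tau$ small and $N$ large enough for both steps'' is not a licit final move: $\delta$ depends on $(\tau,N)$, the cover of $K_\delta$ depends on $\delta$, and the required $\tau$ depends on the cover.

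The paper closes this loop with a scaling observation you omit: once $\Psi_{\tau_1}^{N_1}$ is an immersion with local injectivity on a fixed neighborhood $U$ of the diagonal, the replacement $\tau\mapsto\tau_1/d$, $N\mapsto N_1 d$ leaves that property intact, because $\Psi_{\tau_1}^{N_1}(p)$ reappears as the subvector of $\Psi_{\tau_1/d}^{N_1 d}(p)$ indexed by multiples of $d$. This lets you first lock in $(\tau_1,N_1)$ and the neighborhood $U$, then shrink $\tau$ (and correspondingly inflate $N$) to hit the separating times $s_i$ for the compact set $M\times M\setminus U$, all without disturbing the already-secured local injectivity. You should add this scaling step; without it the argument has a circular dependency between $\delta$, $\tau$, and $N$.
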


    We first provide several examples in Section~\ref{sec:distancesobs} which satisfy the conditions of our theorem.  In the process, we discuss a non-example that violates condition 1 if we're not careful (Example~\ref{ex:spheredist}) and show another non-example which violates condition 2 (Example~\ref{ex:distklein}, part 2).  We then prove our theorem in Section~\ref{sec:goodobservations}, and we explore a special case in Section~\ref{sec:fourier} in which Fourier bases can be used to construct observation functions\footnote{The code to generate all figures in this manuscript can be found at \url{http://www.github.com/ctralie/TwistyTakens}}.

    \section{Background}


    In this section, we provide a more detailed overview of several  concepts utilized in this work, including sliding window embeddings, persistent (co)homology, and Eilenberg-MacLane coordinates.  The latter two tools will be used to empirically validate that our sliding window embeddings recover our chosen state space
     and the underlying dynamics.


    \subsection{Sliding Window Embeddings}

    We express a time series $g(t)$ as an observation $G$ along a dense trajectory $\gamma$ on a manifold $M$, i.e.
    \[g(t) = G(\gamma(t))\]
    for $\gamma : \RR \longrightarrow M$ and $G: M \longrightarrow \RR$.
    We compute the \emph{sliding window} of $g$ as

    \begin{equation}
    \label{eq:slidingwindow}
    \SW^N_\tau g(t) := \left[ \begin{array}{c} g(t) \\ g(t + \tau) \\ g(t + 2 \tau) \\ \vdots \\ g(t + N\tau) \end{array} \right] \in \mathbb{R}^{N+1}
    \end{equation}
    where $N\in \NN$ is the number of delays, $\tau > 0$ is the delay time, and $N \tau$ is the window length.


    We interpret the sliding window $\SW^N_\tau g(t)$ as the evaluation of the Takens map $\Psi^N_\tau$ in Theorem \ref{thm:takens} above on an integral curve $\psi_t(p)$ of a vector field $X$ through a point $p\in M$.
    For if $N=2\cdot\dim{M}$ and $\gamma(t) = \psi_t(p)$, then
    \[\SW_\tau^Ng(t) = \Psi^N_\tau(\psi_t(p)).\]
    For sufficiently large $N$ and small $\tau$, $\SW_\tau^Ng(t)$  densely ``traces'' the embedding $\Psi^N_\tau(M)$ for appropriate choice of observation $G$
    and vector field $X$.


    When $g$ is a periodic function with frequency $\omega\in \RR$, it readily follows
    that the sliding window embedding $\SW^N_\tau g(t)$  traces a closed curve in $\RR^{N+1}$.
    The shape of this curve is closely related to the choice of parameters $N$ and $\tau$, and their relation to $\omega$ \cite{perea2015sliding}.
    In particular, if $\tau$ and $N$ are chosen so that $N$ is large enough and
    $N\tau\omega \approx 1$, then the image of $\SW^N_\tau g$ is in fact a topological circle in $\RR^{N+1}$, whose shape
    is tightly controlled by the Fourier coefficients of $g$.
    In other words, the periodic nature of $g$ ---  a spectral property --- is reflected in the circularity of its sliding window,
    a topological feature.
    Quasiperiodicity is another spectral notion with  a  clear geometric/topological counterpart.
    Indeed, let
     $1,\omega_1, \ldots, \omega_n\in \RR$ be linearly independent over the rational numbers.
    We say that $f: \RR \longrightarrow \RR$ is quasiperiodic with frequencies $\omega_1,\ldots, \omega_n$, if
    it can be written as
    $f(t) = F(t, \ldots, t)$ for some function $F: \RR^n \longrightarrow \RR$  whose  $j$-th marginals $f_j(t) = F(t_1 ,\ldots, t_{j-1}, t , t_{j+1},\ldots, t_n)$ are periodic with frequency $\omega_j$.
    In this case, and for appropriate $N$ and $\tau$, the set $\SW_\tau^N f(\ZZ)$ is dense in an $n$-dimensional torus
    embedded  in $\RR^{N+1}$ \cite{perea2016persistent, hitesh2018}.

    \subsection{Koopman spectra}

    We now review another relevant tool that goes along with sliding window embeddings. For positive flow time $t>0$, the flow $\psi_t$ of a vector field $X$ on a compact manifold $M$ defines a diffeomorphism $\psi_t: M \rightarrow M$. Then the composition map $U^t$, or Koopman operator \cite{Koopman315, das2017delay, Mezic} given by
    \[U^tG = G \circ \psi_t,\]
    is a linear operator on the space of observation functions on $M$. The coordinates of the delay mapping are thus iterated applications of $U^t$ on an observation $G$.

    For certain classes of dynamical systems, the Koopman operator possesses a discrete spectrum and yields a linear expansion
    \[G = \sum_{k=0}^\infty G_k\phi_k\]
    where $\phi_k$ are eigenfunctions of $U^t$ and $G_k$ are \textit{Koopman modes}. For such systems one ``lifts'' the dynamics on the state space to an evolution of observables. For a more comprehensive overview of Koopman theory and its applications, please refer to \cite{koopmanbackground}.

    We will see in Section \ref{sec:fourier} that a high-dimensional delay mapping essentially recovers the Koopman modes of an observation function. We therefore characterize delay embedding observations in terms of spectral decomposition properties. We examine a special case with a Fourier basis for the Koopman operator on the Torus and Klein bottle, and show via our main Theorem \ref{thm:main}  what is needed of these coefficients.

    \subsection{Persistent Homology}

    In practice we evaluate the sliding window $\SW^N_\tau g(t)$ at a finite set of evenly sampled time points $t_1 < \cdots < t_J$.
    This results in a discrete collection of $J$ vectors, referred to as a ``sliding window point cloud''.
    The topology of a point cloud with $J$ points is trivial; it consists of $J$ connected components
    and lacks any other topological features (loops, voids, etc).
    However, if we use a {\em simplicial complex} (a discrete object) to  approximate the underlying space from which the point cloud is sampled,
    then we can estimate the underlying topology via combinatorial means.
    A simplicial complex on a set $V$ of vertices (e.g., a sliding window point cloud) is a collection  $K$ of nonempty subsets
    $\sigma \subset V$, so that if $\emptyset \neq \tau \subset \sigma \in K$, then $\tau \in K$.
    As an example, suppose we seek a simplicial complex with  topology reflecting that of the unit circle $S^1$.
    Starting with the set $V = \{a, b, c\}$ of vertices, we let
    $K = \{a, b, c, \{a, b\}, \{b, c\}, \{a, c\} \}$ be the simplicial complex containing 3 edges between every pair of vertices.
    Like $S^1$, $K$ has one connected component,
    one loop which bounds an empty space, and no higher dimensional features (voids, etc).

    So far, our description of simplicial complexes has been purely combinatorial/topological,
    but one can use geometry to inform their construction.
    An early scheme in Euclidean space is  the alpha complex \cite{edelsbrunner1994three},
    constructed as a family of subcomplexes of  Delaunay triangulations at different scales.
    An even simpler construction, which works  in any metric space, is the so-called ``Vietoris-Rips''  complex at scale $\alpha \geq 0$,
    denoted $R_\alpha(V)$.
    It is comprised of the finite subsets of $V$ which have diameter less than $\alpha$.
    Choosing the ``appropriate'' scale  is ill-posed.
    For instance, Figure \ref{fig:TDAExample} shows a point cloud in $\mathbb{R}^2$ for which it is impossible to choose an appropriate scale at which the simplicial complex contains the two empty loops that are present in the original shape.

    \begin{figure}[htb!]
        \centering
        \includegraphics[width=\textwidth]{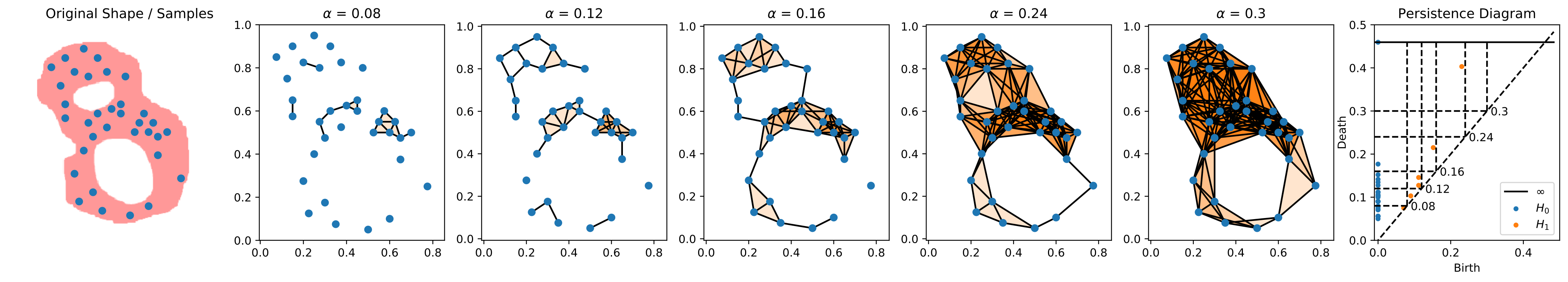}
        \caption{An example of the Rips filtration on a point cloud sampled from a thickened figure eight.
         The Rips complex is shown at different scales, the values of $\alpha$, producing the persistence diagram on the right.}
        \label{fig:TDAExample}
    \end{figure}

    Specifically, $R_{0.16}(V)$ contains the upper loop, but not the lower loop, and $R_{0.24}(V)$ contains the lower loop, but the upper loop is no longer empty.
    In fact, it is impossible to  choose an $\alpha$ in which both loops are present and empty in $R_{\alpha}(V)$ in this example.
    However, we can still summarize the multiscale topological information of any point cloud by performing a {\em filtration} of the complex.
    That is, we evaluate $R_{\alpha}(V)$ as $\alpha$ varies continuously from $0$ to some maximum value,
    so that $R_{\alpha_1}(V) \subset R_{\alpha_2}(V)$ if $\alpha_1 \leq \alpha_2$.
    Throughout this process we keep track of topological features as they appear, or are ``born,'' and as they are filled in, or ``die''.
    For each such {\em homology class}, we can produce a point in a scatter plot, known as the {\em persistence diagram} of the filtration,
    with birth time on the $x$-axis and death time on the $y$-axis.
    Figure \ref{fig:TDAExample} shows a persistence diagram associated with our running example\footnote{We compute persistence diagrams for all examples in this paper using the Python interface to ``Ripser'' \cite{ripser, ripserpy}}.  Intuitively, points further from the diagonal correspond to larger topological features which ``persist'' (stay alive) over longer intervals, and points closer to the diagram correspond to small, ``noisy'' features which are often artifacts of sampling (e.g. the square and pentagon loop that exist at $\alpha = 0.12$).

    For completeness, we extend the above explanation with a brief rigorous presentation.  For a more comprehensive treatment, please refer to  \cite{edelsbrunner2008persistent, edelsbrunner2010computational, carlsson2009topology, ghrist2014elementary, perea2018brief}.
    Let $(\Gamma, \preceq )$ be a partially ordered set.
    A $\Gamma$-filtered simplicial complex is a collection $\mathcal{K} = \{K_\alpha\}_{\alpha \in \Gamma}$ of simplicial complexes,
    so that $K_\alpha \subset K_{\alpha'}$ for every $\alpha \preceq \alpha' \in  \Gamma$.
    The typical examples for point cloud data are the Rips filtration, as we mentioned,
    and the \v{C}ech  filtration motivated by the nerve lemma \cite{Hatcher}.
    Specifically, let $L$ be  a finite subset of a metric space $(\MM,\mathbf{d})$.
    The Rips filtration of $L$ is the  $\RR$-filtered simplicial complex
     $\mathcal{R}(L) = \{R_\alpha(L)\}_{\alpha \in \RR}$.
    Similarly,  for $\ell \in L$ let
    \[
    B_\alpha(\ell) = \{b\in \MM : \mathbf{d}(b,\ell) < \alpha\}
    \;\;\;\;\; \mbox{ and } \;\;\;\;\;
    \mathcal{B}_\alpha = \{B_\alpha(\ell) : \ell \in L\}.\]
    The \v{C}ech complex
    $\check{C}_\alpha(L)$ is defined as the nerve
    of $\mathcal{B}_\alpha$; that is
    $\check{C}_\alpha(L) = \mathcal{N}(\mathcal{B}_\alpha)$
    where
    \[
    \sigma \in \mathcal{N}(\mathcal{B}_\alpha)
    \hspace{.5cm}
    \mbox{ if and only if }
    \hspace{.5cm}
    \bigcap\limits_{\ell \in \sigma} B_\alpha(\ell) \neq \emptyset
    \]
    Hence $\check{C}(L) = \{\check{C}_\alpha\}_{\alpha \in \RR}$
    is an $\RR$-filtered simplicial complex, and
    $R_\alpha(L) \subset \check{C}_\alpha(L) \subset R_{2\alpha}(L)$
    for all $\alpha \in \RR$.

    The persistent homology (resp. cohomology) of a filtered complex $\mathcal{K} = \{K_\alpha\}_{\alpha \in \Gamma}$,
    with coefficients in a field $\FF$, are  defined, respectively, as
    \[
    PH_n(\mathcal{K}; \FF) :=
    \bigoplus\limits_{\alpha \in \Gamma} H_n(K_\alpha;\FF)
    \hspace{1cm} \mbox{and} \hspace{1cm}
    PH^n(\mathcal{K}; \FF) :=
    \bigoplus\limits_{\alpha \in \Gamma} H^n(K_\alpha;\FF)
    \]
    Let $\iota_{\alpha,\alpha'}: H_n(K_\alpha ; \FF) \longrightarrow H_n(K_{\alpha'};\FF)$
    and $\jmath^{\alpha',\alpha}: H^n(K_{\alpha'} ; \FF) \longrightarrow H^n(K_{\alpha};\FF)$
    be the $\FF$-linear maps induced by the inclusion $K_\alpha \subset K_{\alpha'}$, $\alpha \preceq \alpha' $.
    A persistent homology  (resp. cohomology) class is an element
    $\bigoplus_{\alpha \in \Gamma} \nu_\alpha \in PH_n(\mathcal{K}; \FF)$
    (resp. $\bigoplus_{\alpha \in \Gamma} \mu^\alpha \in PH^n(\mathcal{K}; \FF)$ )
    so that $\iota_{\alpha, \alpha'} (\nu_\alpha) = \nu_{\alpha'}$
    (resp $\jmath^{\alpha', \alpha} \left(\mu^{\alpha'}\right) = \mu^{\alpha}$)
    for every $\alpha \preceq \alpha'$.

    When $\Gamma = \RR$,  a theorem of Crawley-Boevey \cite{crawley2015decomposition} contends  that if each
    $H_n(K_\alpha; \FF)$ is finite-dimensional (also known in the literature as being pointwise-finite)
    then one can choose bases $S^\alpha$ for each $H_n(K_\alpha ; \FF)$, satisfying the following compatibility condition:
    \begin{enumerate}
    \item $\iota_{\alpha,\alpha'}(S^\alpha) \subset \left(S^{\alpha'} \cup \{0\}\right)$ for every $\alpha \leq \alpha'$.
    \item If $\iota_{\alpha,\alpha'}(\mathbf{v}_j^\alpha) = \iota_{\alpha,\alpha'}(\mathbf{v}_k^\alpha)$ and $j\neq k$, then
    $\iota_{\alpha,\alpha'}(\mathbf{v}_j^\alpha) = 0$.
    \end{enumerate}
    The set $S = \bigcup\limits_{\alpha \in \RR}S^\alpha$ admits a partial order $\preceq$ given by
    $S^\alpha \ni \mathbf{v} \preceq \mathbf{v}' \in  S^{\alpha'}$
    if and only if $\alpha \leq \alpha'$ and $\iota_{\alpha,\alpha'}(\mathbf{v}) = \mathbf{v}'$.
    The maximal chains in $(S,\preceq)$ are the persistent homology classes.
    To each maximal chain $\mathscr{C} \subset S$ one can associated the point $(b_\mathscr{C}, d_\mathscr{C}) \in [-\infty, \infty] \times [-\infty, \infty]$
      defined by
    \[
    b_{\mathscr{C}} = \inf\{\alpha \in \RR : S^\alpha \cap \mathscr{C}\neq \emptyset\}
    \;\;\;\; ,
    \;\;\;\;
    d_{\mathscr{C}} = \sup\{\alpha \in \RR : S^\alpha \cap \mathscr{C}\neq \emptyset\}
    \]
    The collection of such pairs, where $\mathscr{C}$ runs over all maximal chains, is the persistence diagram for the persistence homology of the filtered
    complex $\mathcal{K}$.

    Persistent cohomology behaves similarly. Indeed, any basis for $H_n(K_\alpha ; \FF)$ yields a well-defined isomorphism
    $H_n(K_\alpha; \FF) \cong H_n(K_\alpha;\FF)^*$ with the linear dual space,
    and the latter is naturally isomorphic to $H^n(K_\alpha ; \FF)$, by the universal coefficient theorem.
    Hence, these isomorphisms turn the $S^\alpha$'s into a collection of compatible bases for the
    cohomology groups $H^n(K_\alpha; \FF)$, showing that persistent homology and cohomology yield the same persistence diagrams.

    \subsubsection{Persistent Homology of Sliding Window Embeddings}
    As mentioned in the introduction, there are numerous examples in the literature of persistent homology on sliding window point clouds.
    For any periodic time series ($x(t) = x(t + kT), k \in \mathbb{Z}$),
    a sliding window embedding yields a topological loop, and there is a point of high persistence in the persistence diagram for $PH_1$ \cite{perea2015sliding}.
    However, the authors of \cite{perea2015sliding} also show, surprisingly, that sliding window embeddings of functions like
    $x(t) = \cos(t) + a \cos(2t)$, $|a| > 1$, can lie on the boundary of an embedded M{\"o}bius strip \cite{perea2015sliding}.
    We use this to help intuitively explain the time series we obtain for the projective plane (Example~\ref{ex:projplane}) and the Klein Bottle
    (Section~\ref{sec:kleinbottle}).
    Note that this also means that field coefficients other than $\mathbb{Z}_2$ are needed to maximize the maximum persistence in $PH_1$.
    In general, for $\cos(t) + a \cos(kt)$, coefficients which are not prime factors of $k$ are needed \cite{perea2015sliding, traliemoebius}.
    Finally, there are works which utilize both $PH_1$ and $PH_2$ to quantify the presence of quasiperiodicity in time series data, by estimating 
    the toroidality of a sliding window point cloud \cite{perea2016persistent, tralie2017quasi}.
    In this work, we extend this suite of examples beyond (possibly twisted) loops and torii to  other manifolds.

    \subsection{Eilenberg-MacLane Coordinates}

    Though persistent homology is informative, one can further utilize it to perform nonlinear dimensionality reduction on  sliding window point clouds, for  visualization purposes and reconstruction of the underlying dynamics.  To this end, we use ``Eilenberg-MacLane coordinates'',  which turn persistent cohomology classes into maps from point clouds  to the circle \cite{de2011persistent,  perea2018circular}, and   (real or complex) projective spaces \cite{ perea2018multiscale}.  
    We present next a more detailed  summary;
    maps to the projective plane are particularly interesting, as they allow us to ``untwist'' non-orientable manifolds like the Klein bottle.

    More formally, if $G$ is  an abelian group \footnote{In this section $G$ will refer to an Abelian group, but it otherwise refers to an observation function.} and $n$  is a positive integer, then it is possible to construct a connected CW complex $K(G,n)$, called an Eilenberg-MacLane space,  whose homotopy type is uniquely determined by two properties:

    \begin{enumerate}
    \item
     its $j$-th homotopy group $\pi_j(K(G,n))$ is trivial for all $j\neq n$
    \item $\pi_n(K(G,n))\cong G$
    \end{enumerate}

    The Brown representability theorem (for CW complexes and singular cohomology) contends that if $B$ is a CW complex, then there is a natural bijection
    \begin{equation}\label{eq:BrownRep}
    H^n(B;G) \cong [B, K(G,n)]
    \end{equation}
    between the $n$-th cohomology of $B$ with coefficients in $G$,
    and the set of homotopy classes of maps from $B$ to $K(G,n)$.

    The two Eilenberg-MacLane spaces we use to generate circular and projective coordinates are:
    $K(\ZZ,1) \simeq S^1$, and  $K(\ZZ/2, 1) \simeq \RP^\infty = \RR^\infty \smallsetminus\{\mathbf{0}\}/\sim$, respectively.
    Here
    $\RR^\infty$ is the collection of infinite sequences of real numbers
    $\mathbf{x} = (x_0, x_1 , \ldots)$ which are nonzero for all but finitely many $x_j$'s, and
    $\mathbf{x} \sim \mathbf{y}$ if and only if $\mathbf{x} = r\mathbf{y}$
    for some $r\in \RR\smallsetminus\{0\}$.
    One can also regard $\RR^\infty$  as  the direct limit of the system  $\RR \subset \RR^2 \subset \RR^3 \subset \cdots$,
    where the inclusion $\RR^j \hookrightarrow \RR^{j+1}$ sends
    $(x_0,\ldots, x_{j-1})$ to $(x_0,\ldots, x_{j-1},0)$.
    With this interpretation in mind,  $\RP^\infty$ can be regarded as the direct limit of the system $\RP^0 \subset \RP^1 \subset \RP^2 \subset \cdots $,
    where $\RP^n = \RR^{n+1}\smallsetminus \{\mathbf{0}\}/\sim$.
    Recently \cite{perea2018multiscale}, it has been shown that if $L$ is a finite subset of a metric space
    $(\MM,\mathbf{d})$,
    and for $\ell \in L$ we let $B_\alpha (\ell)$ be the open ball of radius $\alpha$ centered at $\ell$,
    then persistent cohomology classes in $PH^1(\mathcal{R}(L);\ZZ/2)$ can be used to define projective coordinates
    \[
    f_\mu : \bigcup\limits_{\ell \in L} B_{\alpha}(\ell) \longrightarrow \RP^n
    \]
    Similarly,  persistent cohomology classes in $PH^1(\mathcal{R}(L); \ZZ/q)$, for appropriate choices of prime $q>2$, yield circular coordinates \cite{perea2018circular}
    \[
    f_{\theta, \tau}: \bigcup\limits_{\ell \in L} B_{\alpha}(\ell ) \longrightarrow S^1
    \]
    In both cases, the resulting coordinates mimic the properties of the bijection (\ref{eq:BrownRep}) from Brown's representability.

    \subsubsection{Projective Coordinates}
    \label{sec:projcoords}
    Here is a sketch of the construction of projective coordinates from persistent cohomology classes.
    Let $L = \{\ell_0, \ldots, \ell_n\} \subset \MM$,   and fix a cocycle
     $\mu=\{\mu^\alpha_{jk} \} \in Z^1(R_{2\alpha}(L); \ZZ/2)$
    so that its cohomology
    class is not in the kernel of the homomorphism
    \[
    \iota^{2\alpha, \alpha}:
    H^1(R_{2\alpha}(L);\ZZ/2) \longrightarrow H^1(R_\alpha(L);\ZZ/2)
    \]
    induced by the inclusion $R_\alpha(L) \subset R_{2\alpha}(L)$.
    Since $R_\alpha(L) \subset \check{C}_\alpha(L) \subset R_{2\alpha}(L)$,
    then the rightmost inclusion yields a nonzero class in
    $H^1(\check{C}_\alpha(L);\ZZ/2)$.
    We let
    \[
    \begin{array}{cccl}
      f_\mu : & \bigcup\limits_{\ell \in L} B_\alpha(\ell) = L^{(\alpha)} & \longrightarrow  & \RP^n \\
       & B_{\alpha}(\ell_j) \ni b & \mapsto &
       \left[
       (-1)^{\mu_{j0}^\alpha}|\alpha - \mathbf{d}(b, \ell_0)|_{+}
       : \cdots :
       (-1)^{\mu_{jn}^\alpha}|\alpha - \mathbf{d}(b, \ell_n)|_{+}
       \right]
    \end{array}
    \]
    where $[x_0 : \cdots : x_n] \in \RP^n$ denotes the equivalence class of $(x_0,\ldots, x_n)\in \RR^{n+1}\smallsetminus \{\mathbf{0}\}$,
    and $|r|_{+}:= \max\{0,r\}$ for $r\in \RR$.
    Since $\{\mu_{jk}^\alpha\}$ is a cocycle,
    it readily follows that the point
    $f_\mu(b)\in \RP^n$ is independent of the index
    $j\in \{0,\ldots, n\}$ for which $b \in B_{\alpha}(\ell_j)$.
    In other words, $f_\mu$ is well defined.

    If $\{\nu^\alpha_{jk}\}\in Z^1(R_{2\alpha}(L) ;\ZZ/2)$
    is cohomologous to $\{\mu_{jk}^\alpha\}$,
    and $f_\nu : L^{(\alpha)} \longrightarrow \RP^n$ is the associated map,
    then
    $f_\mu \simeq f_\nu$
    and hence we get a well defined
    function
    \[
    \begin{array}{ccl}
    H^1(R_{2\alpha}(L) ; \ZZ/2 ) & \longrightarrow &\left[L^{(\alpha)}, \RP^n\right]  \\
    \left[\mu\right]&  \mapsto & [f_\mu]
    \end{array}
    \]
    The metric properties of  $f_\mu$ are also
     determined by the cohomology
    class of $\mu$.
    For if  
    \[
    \mathbf{d}_g(\mathbf{x}, \mathbf{y}) :=
    \arccos
    \left(\frac{
    |\langle \mathbf{x}, \mathbf{y} \rangle|
    }
    {\|\mathbf{x}\|\cdot \|\mathbf{y}\|}\right)
    \]
    denotes the geodesic distance in $\RP^n$,
     then it readily follows that
    \[
    \mathbf{d}_g\left(f_\nu(b), f_\nu(b')\right)
    =
    \mathbf{d}_g\left(f_\mu(b), f_\mu(b')\right)
    \]
    for all $b,b'\in L^{(\alpha)}$ and $\mu, \nu$ in the same cohomology class.

    Given a finite set $P \subset L^{(\alpha)}$, taking its image
    through $f_\mu$ yields a new point cloud
    $f_\mu(P) \subset \RP^n$.
    A dimensionality-reduction scheme in
    $\RP^n$ referred to as  principal projective component
    analysis is also defined in \cite{perea2018multiscale}.
    This procedure yields
    a sequence of maps
    \[
    P_k : f_\mu(P) \longrightarrow \RP^k, \hspace{1cm}
     k = 0,  \ldots, n
     \]
    minimizing an appropriate notion of (metric) distortion.
    In particular, $P_k\circ f_\mu(P)$ and
    $P_k \circ f_\nu(P)$ are isometric if $\mu$ and $\nu$ are cohomologous.
    The point clouds
    $P_k \circ f_\mu(P) \subset \RP^k$
    are referred to as the projective
    coordinates of $P$, induced by the landmarks
    $L \subset \MM$ and the cohomology class
    $\left[\mu \right] \in H^1(R_{2\alpha}(L); \ZZ/2)$.

    As an example, Figure~\ref{fig:ProjectiveCoordsExample} shows the projective coordinates onto $\RP^2$ of points sampled from a Klein bottle $\KK$, using the flat metric on the torus $\TT$, descended onto the automorphism $\kappa: (x,y) \mapsto (x+\pi, -y)$.
    We use the cocycle representative which is the sum of the  representative cocycles from the two most persistent classes.

    \begin{figure}[!htb]
        \centering
        \includegraphics[width=0.9\textwidth]{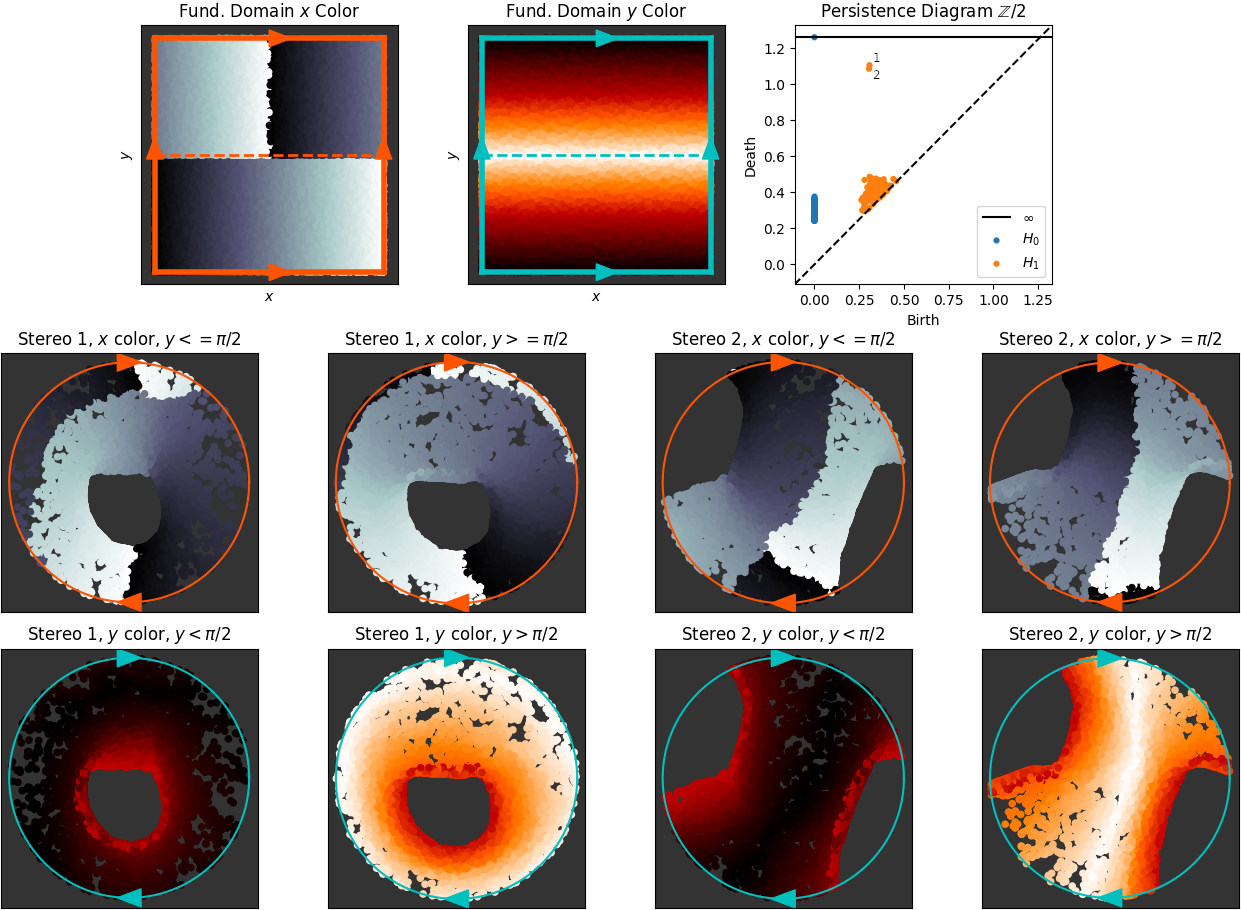}
        \caption{An example of projective coordinates for point sampled from a flat Klein bottle   obtained as a quotient of the torus via $[x, y] \sim [x + \pi, -y]$.  The two coordinates are colored according to their $x$ and $y$ positions on the fundamental domain $[0, 2 \pi] \times [0, \pi]$, and we show two different stereographic projections to the plane from $\RP^2$.  If the fundamental domain is split into distinct parts $A = [0, 2 \pi] \times [0, \pi/2]$ and $B = [0, 2 \pi] \times [\pi/2, \pi]$, then $A$ and $B$ map to two distinct M{\"o}bius strips which are attached at their boundaries at $y = \pi/2$ (medium red for the y colors), which is indeed what happens when the Klein bottle is cut down the middle.}
        \label{fig:ProjectiveCoordsExample}
    \end{figure}

    In fact, this is a 2 to 1 map, as shown in Figure~\ref{fig:ProjectiveCoordsExample}.  Just as a torus can be obtained from gluing two annuli together at their boundary, the Klein bottle can be obtained by gluing two M{\"o}bius strips at their boundary.  Each one of these M{\"o}bius strips is visible in the odd and even columns of the bottom two rows of Figure~\ref{fig:ProjectiveCoordsExample}, respectively.  In particular, the loops $[0, 2 \pi] \times 0$ and $[0, 2 \pi] \times \pi$ are at the center of each M{\"o}bius strip, and the boundaries of each M{\"o}bius strip at $[0, 2 \pi] \times \pi/2$ get identified at the center of the projective coordinates plot.  We will observe similar projective coordinates for the sliding window of our Klein bottle time series in Section~\ref{sec:kleinbottle}.

    \subsubsection{Circular Coordinates}

    The idea of using the bijection
    $H^1(B;\ZZ) \cong [B , S^1]$ to construct circle-valued functions
    for data, from persistent cohomology classes,
    was first introduced by de Silva et. al. \cite{de2011persistent}.
    Their construction has shortcomings (not sparse, not transductive)
    which are addressed in \cite{perea2018circular};
    the latter is the procedure we use in the paper
    and the one we describe next.

    Let $q>2$ be a prime so that the homomorphism
    \[H^1(R_{2\alpha}(L) ;\ZZ) \longrightarrow H^1(R_{2\alpha}(L);\ZZ/q)\]
    induced by the projection $\ZZ \longrightarrow \ZZ/q$,
    is surjective.
    Hence, any
    $\mu\in Z^1(R_{2\alpha}(L);\ZZ/q)$
     has a lift 
    $\tilde{\mu}\in Z^1(R_{2\alpha}(L);\ZZ)$.
     Moreover, if $\iota : \ZZ \hookrightarrow \RR $
    is the inclusion homomorphism, then
    there are cochains
    $\theta \in Z^1(R_{2\alpha}(L);  \RR)$
    and $\tau \in C^0(R_{2\alpha}(L) ; \RR)$
    so that
    $\theta$ is the unique harmonic cocycle
    representative of $\iota^*([\tilde{\mu}])$ and
    $\iota^\#(\tilde{\mu}) = \theta - \delta^0\tau$. From this data we define
    \[
    \begin{array}{cccl}
      f_{\theta,\tau} : & \bigcup\limits_{\ell \in L} B_\alpha(\ell)  & \longrightarrow  & S^1\subset \CC \\
       & B_{\alpha}(\ell_j) \ni b & \mapsto &
       \exp
       \left\{
       2\pi i
       \left(
       \tau_j + \sum\limits_{k=0}^n \theta_{jk}\varphi_k (b)
       \right)
       \right\}
    \end{array}
    \]
    where
    \[
    \varphi_k (b)= \frac{|\alpha - \mathbf{d}(b,\ell_k)|_{+}}
    {\sum\limits_{r=0}^n |\alpha - \mathbf{d}(b,\ell_r)|_{+}}
    \]

    Figure~\ref{fig:CircularCoords} shows an example of this algorithm on a point cloud sampled from a torus, using 400 landmarks.  In this example, the algorithm is able to find maps from the points to the inner and outer circle of the torus.

    \begin{figure}[!htb]
        \centering
        \includegraphics[width=\textwidth]{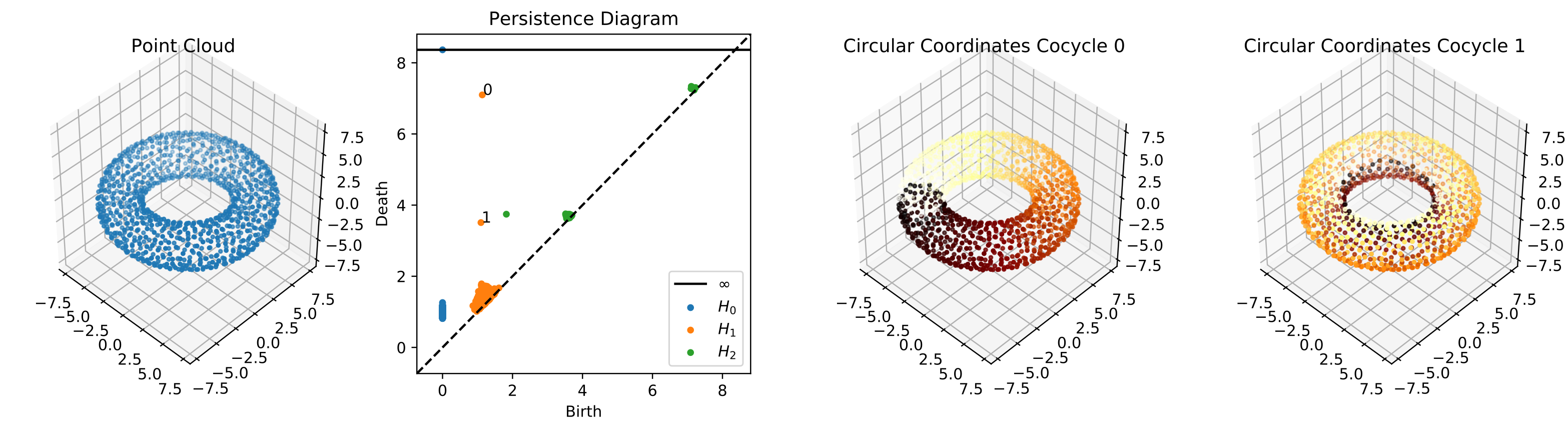}
        \caption{An example of the circular coordinates algorithm on a point cloud sampled from a torus in $\mathbb{R}^3$.  The third plot shows the coordinates resulting from the representative cocycle of the largest persistence class, which goes around the large circle on the outside, while the fourth plot shows the circular coordinates resulting from the cocycle from the second largest persistence class, which wraps around the inner circle.}
        \label{fig:CircularCoords}
    \end{figure}

    \section{Preliminary Examples: Distance To A Point As Observation Function}
    \label{sec:distancesobs}
    To motivate a more general development of good observation functions on manifolds, we first explore a very specific genre of observation functions: 
    those which arise as the distance to a specified point in the manifold.  We then verify the geometric integrity of a delay coordinate mapping of the resulting time series using persistent homology and Eilenberg-Maclane coordinates on a few examples.  Through these tools and a visual comparison of the time series to known examples, we will already be able to explain quite a lot, including motivating both conditions of Theorem~\ref{thm:main}, though a full development of the theory in Section~\ref{sec:goodobservations} is needed to justify these choices of observation functions.
    
    In the discussion below, all of our observation functions are of the form $G(x) = d(x, \hat{x})$, where $d$ is some metric chosen on the manifold and $\hat{x}$ is some fixed point on the manifold which is our ``reference distance point.''

    \begin{example}
    Flat torus $\TT$

    \begin{figure}[!htb]
        \centering
        \includegraphics[width=\textwidth]{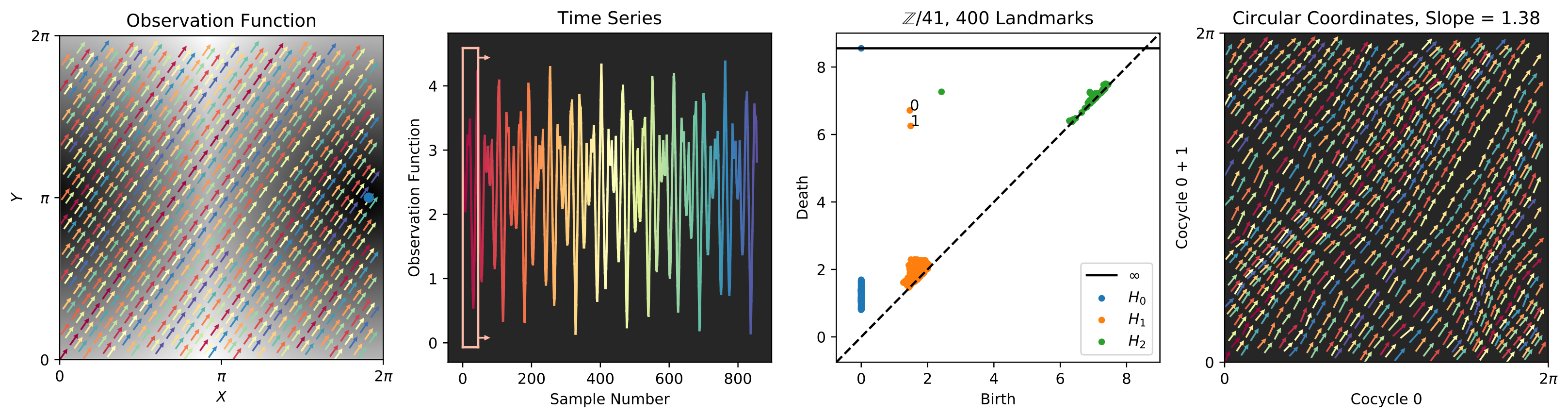}
        \caption{An irrational winding on the flat torus, with an observation function as the distance to the point $\hat{x} = (6, \pi)$, which is shown as a blue dot on the left plot.  The distance from this point is indicated in gray (dark means close, light means far).  The resulting time series is shown in the second plot, with a sliding window indicated with a red box.  The third and fourth figures show, respectively, the persistence diagrams of the sliding window point cloud and the resulting circular coordinates.
        The arrows in the fourth plot are the recovered dynamics; they indicate the order on the sliding windows inherited from the time series.  Colors are coordinated between the flows in the first, second, and fourth plots.  Similar plotting conventions are present in Figures ~\ref{fig:KleinDist},~\ref{fig:KleinDistFail},~\ref{fig:SphereDist},~\ref{fig:ProjDist},~\ref{fig:TwoHoledTorusDist}.}
        \label{fig:TorusDist}
    \end{figure}

    We first examine the planar torus $\TT=\RR^2/2\pi\ZZ^2$, parameterized by $(u, v) \in [0, 2 \pi] \times [0, 2 \pi]$.  As our dynamics, we take the irrational winding  $\psi_t(u, v) = (u + \sqrt{2} dt, v + dt)$, and the observation $G(u, v)$ is the flat geodesic distance between $(u, v)$ and the point $\hat{x} = (6, \pi)$.  This is shown in Figure~\ref{fig:TorusDist}.  After performing a delay embedding on the resulting time series with window length of 30 samples, we see two persistent $H_1$ classes and 1 persistent $H_2$ class, which is the signature of a torus.  Furthermore, circular coordinates resulting from the top two persistent classes in $H_1$ recovered the full original flow specification.
    \end{example}

    \begin{example}
    \label{ex:distklein}

    Flat Klein Bottle $\KK$

    \begin{figure}[!htb]
        \centering
        \includegraphics[width=\textwidth]{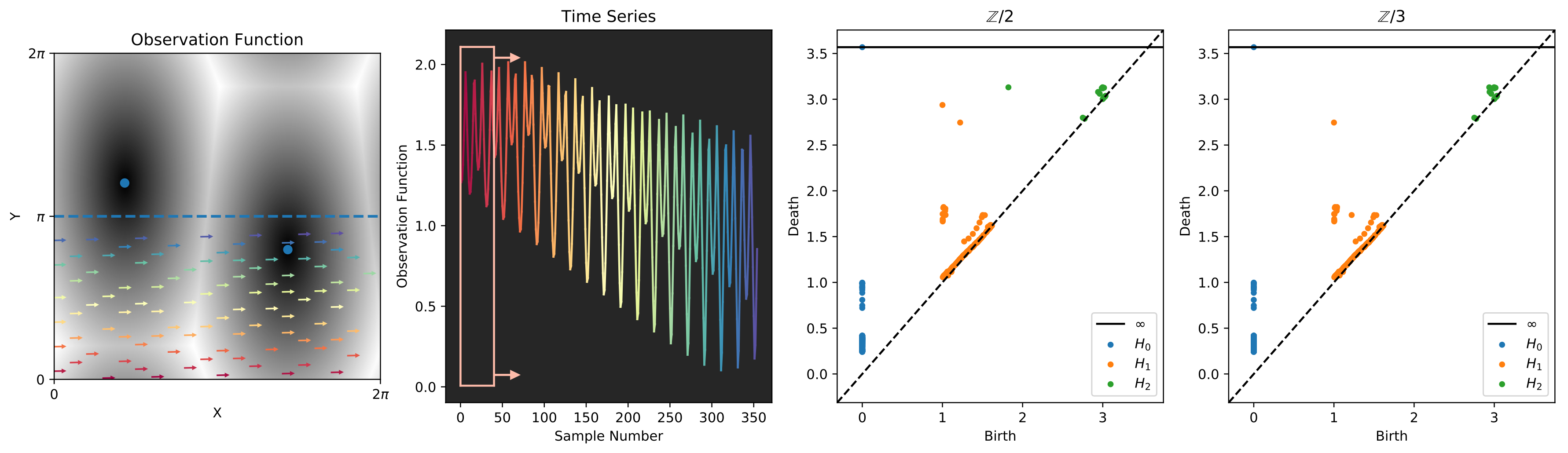}
        \caption{A winding with a very shallow slope on the fundamental domain of a flat Klein bottle, which is double covered by the flat torus by the automorphism $(x, y) \sim (x + \pi, -y)$.  The observation function is then a scaled $L^2$ distance from the point $\hat{x} = (4.5, 2.5)$, which descends under the automorphism.}
        \label{fig:KleinDist}
    \end{figure}

    As in our projective coordinates example in Figure~\ref{fig:ProjectiveCoordsExample}, we now form a quotient on the domain of the flat torus to create a Klein bottle, via the automorphism $\kappa: (x, y) \sim (x + \pi, -y)$.  Then, the metric on the torus descends to the Klein bottle via $\kappa$.  We use a slightly modified weighted $L^2$ flat metric as our distance measure for the observation function; that is

    \begin{equation}
    d_{\alpha, \beta}((u_1, u_1), (u_2, u_2)) = \sqrt{ \alpha^2 (u_1 - u_2)^2 + \beta^2 (u_1-u_2)^2}
    \end{equation}

    In this particular example, we let $\alpha = 1$ and $\beta = 0.5$, and we take an observation to the point $\hat{x} = (4.5, 2.5)$; that is, $G(u, v) = d_{1, 0.5}((u, v), (4.5, 2.5)$.  Finally, we use a flow with a very shallow slope, $\psi_t(u, v) = (u + dt, v + 0.05dt)$, in the fundamental domain $y<\pi$. After performing a sliding window embedding with a window length of 30 samples, we see two persistent classes in $H_1$ and one persistent class in $H_2$ with $\mathbb{Z}/2$ coefficients, but we only see one class in $H_1$ and no classes in $H_2$ with $\mathbb{Z}/3$ coefficients.  This is indeed the signature of a Klein bottle.  We will show projective coordinates on a similar example with a slightly different observation function in Section~\ref{sec:kleinbottle}, and we will explain more intuitively visual features of the time series at that point.

    \begin{figure}[!htb]
        \centering
        \includegraphics[width=\textwidth]{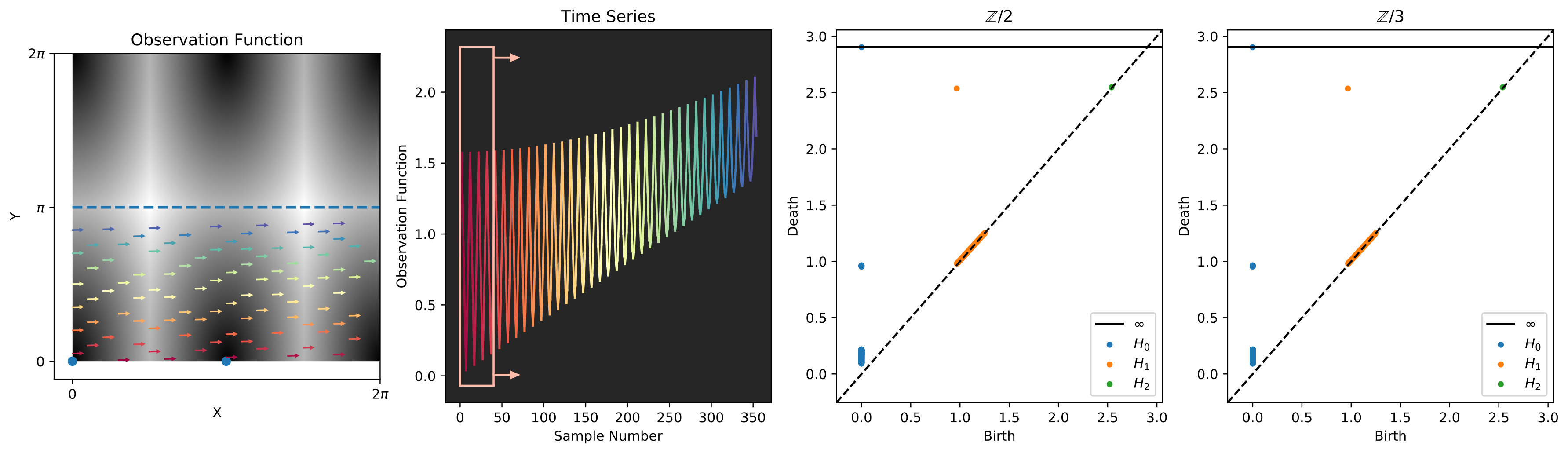}
        \caption{Not all distance functions on the Klein bottle work.  The conditions here are the same as in Figure~\ref{fig:KleinDist}, but the point $\hat{x}$ from which distance is measured has been moved to $(\pi, 0)$.  The sliding window embedding degenerates to a cylinder in this example.}
        \label{fig:KleinDistFail}
    \end{figure}

    Note that not every distance function will lead to a reconstruction of the Klein bottle.  For instance, if we use the same flow $\psi_t$ but an observation function $G(u, v) = d((u, v), (\pi, 0))$, as in Figure~\ref{fig:KleinDistFail}, then the sliding window embedding of the resulting time series degenerates to a cylinder, because there exist pairs of points with the same observation curves under the flow.  This motivates condition 2 in Theorem~\ref{thm:main}.
    \end{example}

    \begin{example}
    \label{ex:sphere}
    Sphere $\mathcal{S}^2$

    \label{ex:spheredist}
    \begin{figure}[!htb]
        \centering
        \includegraphics[width=0.65\textwidth]{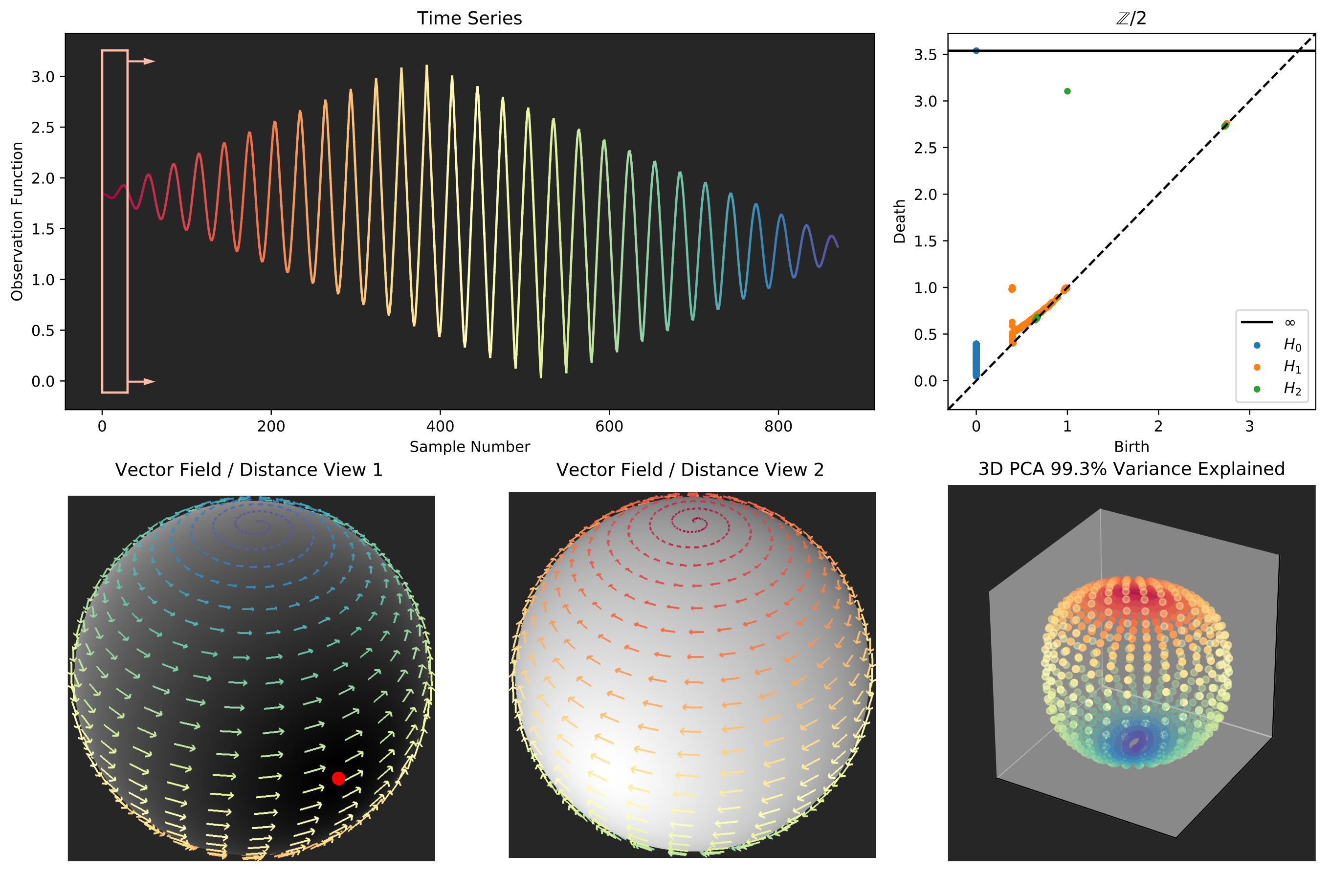}
        \caption{An observation function on the sphere which is the geodesic distance from a point $\hat{x}$ drawn in red.  The top and bottom views of the vector field are drawn in the left two figures.  3D PCA of the sliding window embedding, which retains nearly all of the variance of the sliding window point cloud, is shown in the bottom right plot.}
        \label{fig:SphereDist}
    \end{figure}

    We now reconstruct the sphere from a given trajectory and distance function.  Tralie \cite{tralie2017Dissertation} showed empirically that a sliding window embedding of a helical trajectory, under the observation function on the sphere which is the arclength from some point on the sphere, yields an embedding of the sphere.  We replicate this here.  More specifically, we parameterize the unit sphere in spherical coordinates $(\phi, \theta)$ (where $\phi$ is azimuth and $\theta$ is elevation from the north pole), we let $\psi_t(\phi, \theta)_{\alpha} = (\phi + dt, -\pi/2 + \theta dt)$, and the let the observation $G(\phi, \theta)$ to a point $\hat{x} = (\hat{\phi}, \hat{\theta})$ be
    
    \begin{equation}
        G(\theta, \phi) = \cos^{-1} \left( \cos(\phi)\sin(\theta)\cos(\hat{\phi})\sin(\hat{\theta}) + \sin(\phi)\sin(\theta)\sin(\hat{\phi})\sin(\hat{\theta}) + \cos(\theta)\cos(\hat{\theta}) \right)
    \end{equation}
    
    We repeat this here in Figure~\ref{fig:SphereDist}.  In this example, simple linear dimension reduction via PCA is able to recover the most of the geometry of the sliding window point cloud, though spherical coordinates are also possible in the Eilenberg-MacLane framework \cite{perea2018multiscale}.

    One pitfall in this example is that the observation point $\hat{x}$ cannot lie on the equator or the north or south poles; that is, $\hat{\phi} \notin \{ -\pi/2, 0, \pi/2 \}$. In these cases, the helix structure is flattened to a spiral,so the sliding window embedding degenerates to a disc.  This motivates the ``derivative rank'' condition, or condition 1 in Theorem~\ref{thm:main}.

    \end{example}

    \begin{example}
    \label{ex:projplane}
    Projective plane $\RP^2$

    \begin{figure}[!htb]
        \centering
        \includegraphics[width=0.65\textwidth]{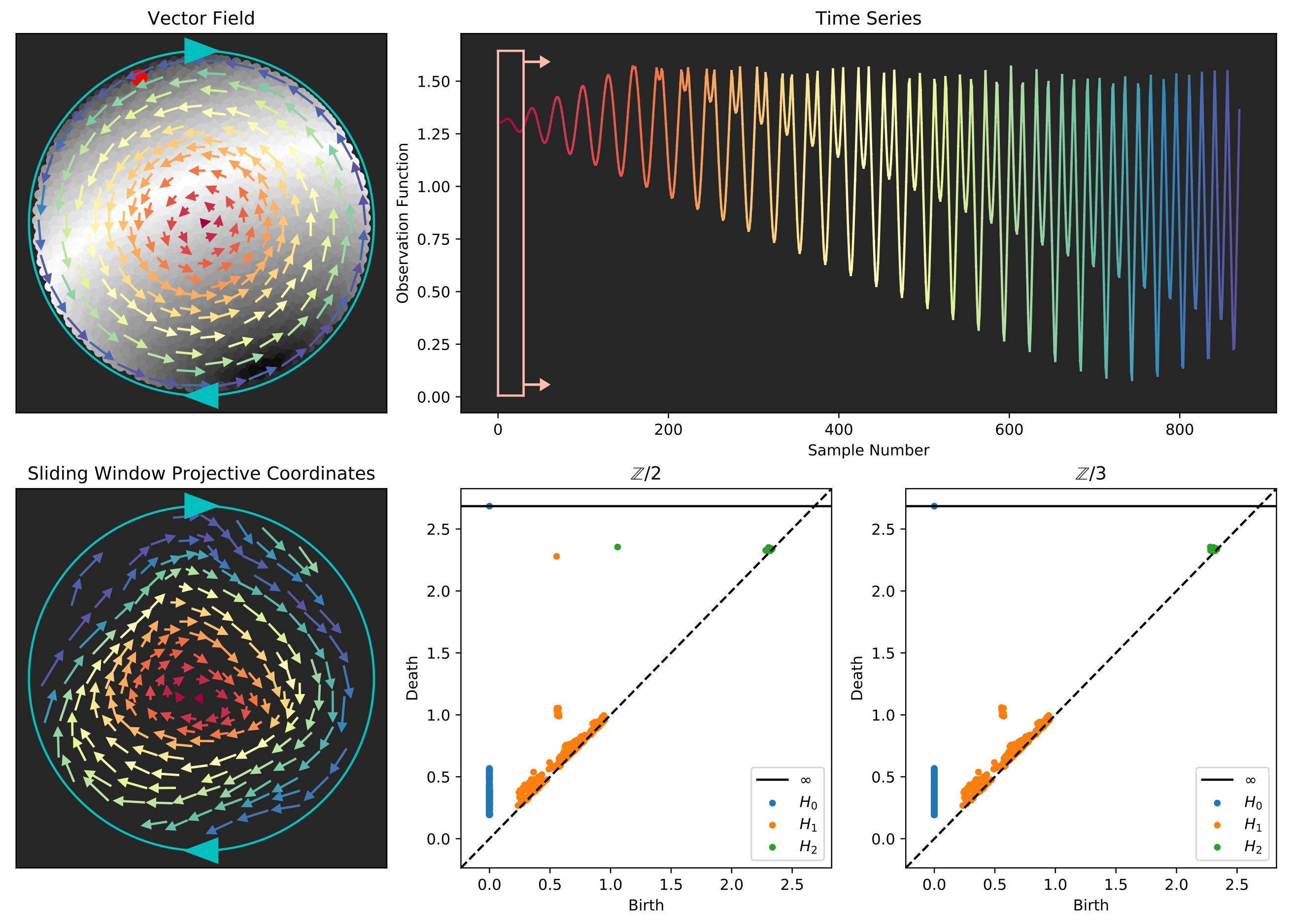}
        \caption{An observation function on $\RP^2$ which is the geodesic distance from a point $\hat{x}$ drawn in red.}
        \label{fig:ProjDist}
    \end{figure}

    We can extend the scheme that we used in Example~\ref{ex:sphere} to the projective plane $\mathbb{RP}^2$ by taking a flow only on the upper hemisphere and performing the antipodal identification at the equator $x \sim -x$.  The flow $\psi_t$ is the same, but the observation function changes to
    
    \begin{equation}
        G(\theta, \phi) = \cos^{-1} \left| \cos(\phi)\sin(\theta)\cos(\hat{\phi})\sin(\hat{\theta}) + \sin(\phi)\sin(\theta)\sin(\hat{\phi})\sin(\hat{\theta}) + \cos(\theta)\cos(\hat{\theta}) \right|
    \end{equation}
    
    Figure~\ref{fig:ProjDist} shows this result, in which a single highly persistent point is present for both $H_1$ and $H_2$ using $\mathbb{Z}/2$ coefficients, but in which none are present for $\mathbb{Z}/3$, which is a correct signature of $\RP^2$.  Interestingly, the quotient identification is visible in the time series itself; the time series in Figure~\ref{fig:ProjDist} can be obtained from the time series in Figure~\ref{fig:SphereDist} by reflecting values above the line $y = \pi/2$ across that line.  This is because the maximum distance between any two points on $\RP^2$ is $\pi/2$.  Additionally, both the sphere time series and the M{\"o}bius loop time series ($\cos(t) + a \cos(2t)$) are visible in Figure~\ref{fig:ProjDist}.  The time series starts off in a spiral, which fills out a disc, and this disc transitions to a spiraling M{\"o}bius loop time series which fills out the strip.  This visually reflects the fact that $\RP^2$ is the connected sum of a disc and the boundary of a cross-cap.  We will use  a similar intuition to explain the Klein bottle time series in Section~\ref{sec:kleinbottle}.
    \end{example}

    \begin{example}
    Genus 2 surface $\TT \# \TT$

    \begin{figure}[!htb]
        \centering
        \includegraphics[width=\textwidth]{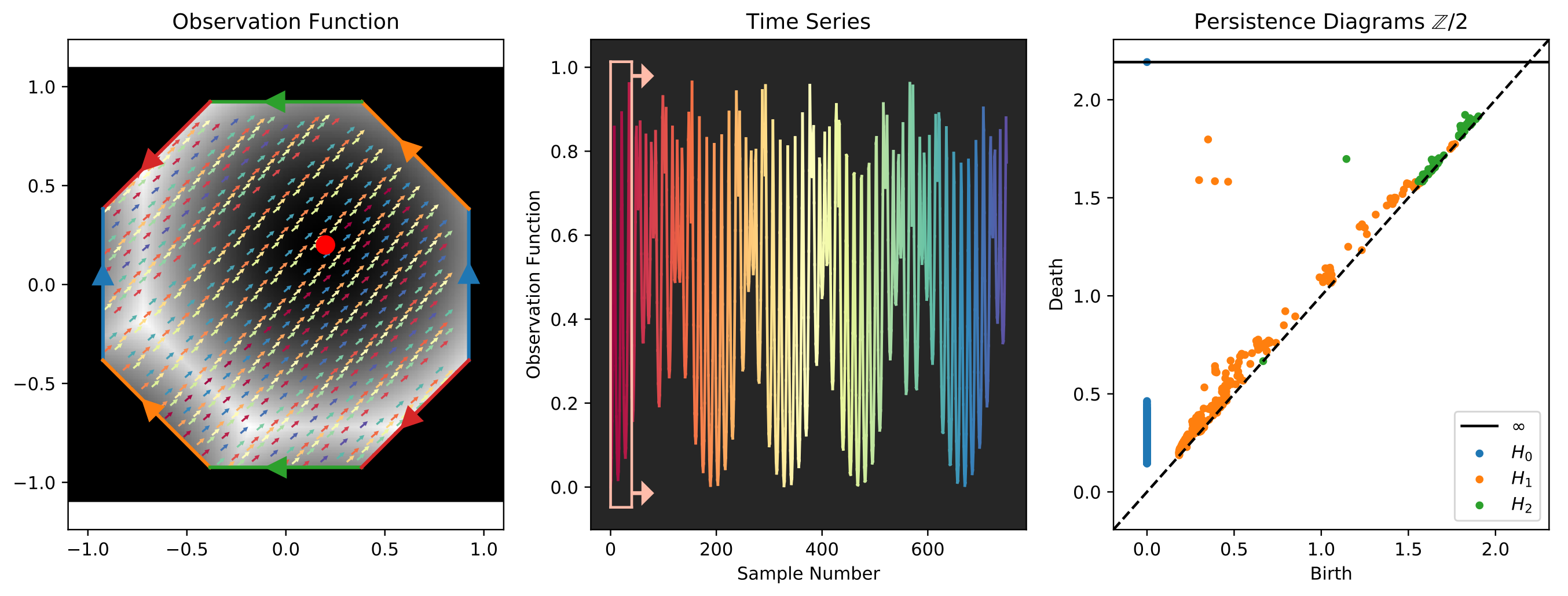}
        \caption{An example of a time series resulting from a dense flow on the 2-holed torus, using the flat squared Euclidean distance \cite{zorich2006flat} from an observation point (shown as a red dot).}
        \label{fig:TwoHoledTorusDist}
    \end{figure}

    Finally, we show a time series whose sliding window embedding lies on the two holed torus.  We use an irrational flow with slope $(dt, dt \sqrt{3}/2)$, with an observation function as the squared flat metric on the fundamental domain \cite{zorich2006flat} represented by an octagon with opposite sides identified.  Figure~\ref{fig:TwoHoledTorusDist} shows the result, in which four highly persistent dots are visible in $H_1$ and a single persistent dot is visible in $H_2$, matching what is expected of the homology of a genus 2 surface.

    \end{example}

    \section{Main Theorem: Characterizing Good Observation Functions}
    \label{sec:goodobservations}

    As our main theoretical contribution, we now state more general conditions for good observation functions.  Let $M$ be a compact manifold of dimension $m$, $G:M \rightarrow \RR$ a smooth function, and $X$ a vector field with flow $\psi_t$. Applying $G$ to an integral curve $\gamma_p(t) = \psi_t(p)$ through a point $p$
    yields a real-valued function
    \[g_p := G\circ\gamma_p\]
    in $t$, the \textit{observation curve} of $p$. For sufficiently nice $G$ and $X$, one can recover the point $p$ from a finite uniform sampling of $g_p$. More precisely, the \textit{Takens map} $\Psi_\tau^N: M \rightarrow \RR^{N+1}$ defined by
    \[\Psi_\tau^N(p) = \big(g_p(0), g_p(\tau), g_p(2\tau), \ldots, g_p(N\tau)\big)\]
    is an embedding for some dimension $N>0$ and flow time $\tau>0$. For such $G$ and $X$ we say $G$ is a \textit{good observation} for $X$.
    
    \subsection{Motivation for the approach}
    As a simple example, take $M=S^1 = \RR/2\pi\ZZ$, $\psi_t(x) = x+t$, and $G(x) = \cos(x)$. The point $x$ is uniquely determined by sampling the two values $g_x(0)=\cos(x)$ and $g_x(\pi/2)= -\sin(x)$ and the Takens map
    \[\Psi_{\pi/2}^1(x) = (\cos(x), -\sin(x)) \]
    is an embedding, so $G$ is a good observation.

    On the other hand, the doubly periodic function $G(x) = \cos(2x)$ is not a good observation function. Indeed, any integral curve $g_x(t)$ is invariant under a $\pi$-shift of $x$, as $G$ cannot distinguish between any flow of $x$ and $x+\pi$. In fact, the good observation functions on $S^1$ for the rotational dynamic are precisely ones with minimum period $2\pi$

    In higher dimensions the task of recovering $p$ from $g_p$ becomes less clear. Consider the torus $\TT = S^1\times S^1$ and $G:\TT \rightarrow \RR$ given by
    \[G(x,y) = \cos(x) + \cos(y)\]
    and $\psi_t$ an irrational flow
    \[\psi_t(x,y) = (x+\alpha t, y+ \beta t)\]
    and thus for $p = (x,y) \in \TT$ we have the observation curve
    \[g_p(t) = \cos(x+\alpha t) + \cos(y + \beta t) \]
    For $G$ to be good, there must be a $\tau$ such that each $p$ is uniquely determined by sampling $G$ along the integral curve $\gamma_p$ at finitely many $\tau$-steps. Since we are free to shrink $\tau$ and increase $N$, it is natural to examine infinitesimal changes of $G$ along the flow $\psi_t$. The derivatives
    \begin{align*}
    g_p(0) &= \cos(x) + \cos(y)\\
    g_p'(0) &= -\alpha\sin(x) - \beta\sin(y)\\
    g_p^{(2)}(0) &= -\alpha^2\cos(x) - \beta^2\cos(y)\\
    g_p^{(3)}(0) &= \alpha^3\sin(x) + \beta^3\sin(y)
    \end{align*}
    up to 3\ts{rd} order yield the linear equation
    \[\begin{pmatrix}1 & 1 & 0 & 0 \\ -\alpha^2 & -\beta^2 & 0 & 0 \\ 0 & 0 & -\alpha & -\beta \\ 0 & 0 & \alpha^3 & \beta^3\end{pmatrix}
    \begin{pmatrix} \cos(x) \\ \cos(y) \\ \sin(x) \\ \sin(y)\end{pmatrix} = \begin{pmatrix} g_p(0) \\ g_p'(0) \\ g_p^{(2)}(0) \\ g_p^{(3)}(0) \end{pmatrix} \]
    Equivalently, over $\CC$, the linear system
    \[\begin{pmatrix}1 & 1 & 1 & 1 \\ i\alpha & -i\alpha & i\beta & -i\beta \\ -\alpha^2 & -\alpha^2 & -\beta^2 & -\beta^2 \\ -i\alpha^3 & i\alpha^3 & -i\beta^3 & i\beta^3 \end{pmatrix}
    \begin{pmatrix} e^{ix} \\ e^{-ix} \\ e^{iy} \\ e^{-iy}\end{pmatrix} = \begin{pmatrix} g_p(0) \\ g_p'(0) \\ g_p^{(2)}(0) \\ g_p^{(3)}(0) \end{pmatrix} \]
    has invertible Vandermonde matrix and one can solve for $e^{ix}$ and $e^{iy}$. Therefore $(x,y)$ is uniquely determined by $g_p^{(k)}(0)$'s. Choosing $\tau$ small enough so that $g_p(\tau)$ is close to the 3\ts{rd} order Taylor polynomial of $g_p$ about $0$, we see that $p$ is uniquely determined (modulo $2\pi$) by a $\tau$-uniform finite sampling of $g_p$.

      \subsection{Main theorem and proof}
    The above calculation illustrates our approach to determining whether $G$ is good: by studying the Taylor coefficients $g_p^{(k)}$. Note that $g_p^{(k)}(t)$ is the $k$-fold derivation of $X$ applied to $G$ at $\psi_t(p)$, i.e., in Lie derivative notation,
    \[g_p^{(k)}(t) = \Ld_X^kG(\psi_t(p)).\]
    $\Ld_X$ is the linear operator on tensor fields which measures infinitesimal change along $X$, i.e. if $T$ is a tensor then

    \[\Ld_XT(p) = \frac{d}{dt}\bigg|_{t=0}((\psi_{-t})_*T_{\psi_t(p)})\]
    Writing $dG$ for the differential of $G$, and $\wedge$ for exterior product, we now state the main result:

    \begin{theorem}\label{thm:main}
    The Takens map $\Psi_{\tau}^{N}$ is an embedding for some $N>0$ and flow time $\tau>0$, if the following conditions hold:
        \begin{enumerate}
        \item For any point of $p\in M$ there is an $m$-tuple $J\in \ZZ_{\geq 0}^m$ of nonnegative integers such that the $m$-form
        \[\Ld_{X}^{\wedge J}dG := \bigwedge_{j\in J}\Ld_X^jdG\]
        is nonzero at some point on the integral curve $\gamma_p(s)$.

        \item For any pair of distinct points $p, q\in M$ the observation curves $g_p(s)$ and $g_q(s)$ are not identical.
        \end{enumerate}
    \end{theorem}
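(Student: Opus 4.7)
The plan is to establish that $\Psi_\tau^N$ is both an immersion and globally injective for some choice of $N$ and $\tau$; since $M$ is compact, an injective immersion automatically yields an embedding.

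For the immersion property I would follow the Taylor/Vandermonde approach hinted at by the torus calculation in the motivation. Using that $\Ld_X$ commutes with $d$, so $\Ld_X^j dG = d\Ld_X^j G$, the asymptotic expansion $d(G\circ\psi_t)|_p = \sum_{j=0}^{N}\frac{t^j}{j!}\,\Ld_X^j dG|_p + O(t^{N+1})$ factors the Jacobian of $\Psi_\tau^N$ at $p$---whose rows are $d(G\circ\psi_{k\tau})|_p$---as an invertible $(N+1)\times(N+1)$ Vandermonde-type matrix $V_\tau$ times the ``jet matrix'' $A(p)$ with rows $\Ld_X^j dG|_p$, plus an $O(\tau^{N+1})$ perturbation. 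When $A(p)$ has full rank $m$, the rescaling $V_\tau = V_1\,\mathrm{diag}(\tau^j/j!)$ shows that this perturbation does not destroy rank for small $\tau$, so $\Psi_\tau^N$ is an immersion at $p$. When $A(p)$ is rank-deficient but Condition 1 supplies some $s_p$ with $A(\psi_{s_p}(p))$ of rank $m$, I would expand the sample covectors near index $k_0 = \lfloor s_p/\tau \rfloor$ via a Taylor series centered at $t = s_p$; the coefficients in this expansion are the pullbacks of the rows of $A(\psi_{s_p}(p))$ by the invertible $d\psi_{s_p}|_p$, so the same Vandermonde argument delivers rank $m$ at $p$. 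Openness of the condition ``the orbit meets $\{\bigwedge_{j\in J}\Ld_X^j dG \neq 0\}$ within time $T$'' and compactness of $M$ then reduce to a finite list of tuples $J$ with a common bound on indices and shift times, producing a single $(N,\tau)$ that works across $M$.

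For injectivity I would combine the immersion with Condition 2. By compactness, immersion produces a $\delta > 0$ such that $\Psi_\tau^N$ is injective on every metric ball of radius $\delta$. For the complementary set $K_\delta = \{(p,q)\in M\times M : d(p,q)\geq \delta\}$, which is compact, Condition 2 supplies for each pair a separating time $s_{pq}$ with $g_p(s_{pq})\neq g_q(s_{pq})$; continuity makes this inequality stable in $(p,q)$, so a finite subcover yields times $s_1,\ldots,s_L$ and a uniform gap $\epsilon>0$ separating every pair in $K_\delta$ at one of the $s_i$. Taking $\tau$ small and $N\tau$ large enough that the grid $\{k\tau\}$ comes within $O(\tau)$ of every $s_i$, continuity transfers the separation to grid points and yields $\Psi_\tau^N(p)\neq \Psi_\tau^N(q)$ on the far-apart regime as well.

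The main obstacle will be the bookkeeping in the immersion step: the matrix $V_\tau$ becomes ill-conditioned as $\tau\to 0$, so one must carefully track how the scaling $\Lambda_\tau^{-1} = \mathrm{diag}(j!/\tau^j)$ amplifies the $O(\tau^{N+1})$ Taylor remainder---row $j$ is amplified by $\tau^{-j}$ but remains small because $j\leq N$. Making every estimate (rank thresholds, separation gaps, required window length) uniform in $p\in M$ so that a single pair $(N,\tau)$ embeds all of $M$ is the essential technical point throughout.
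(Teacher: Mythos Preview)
Your proposal is correct and follows the same overall architecture as the paper: establish immersion via a Taylor/Vandermonde analysis of the Jacobian rows $d(G\circ\psi_{k\tau})|_p$, then obtain global injectivity by combining local injectivity on a tube around the diagonal with a compactness argument on the complement using Condition~2.

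The one substantive difference is in how the immersion step is executed. The paper works directly with the top exterior power $\omega_p^I(t)=\bigwedge_{i_k\in I}d(G\circ\psi_{i_kt})|_p$, applies the Cauchy--Binet formula to identify its Taylor coefficients as sums of the forms $\Ld_X^{\wedge J}dG|_{\tilde p}$ weighted by monomials $|I^J|$ in the index tuple, and then chooses $I$ (via a colexicographic dominance argument over the minimal-degree set $\mathcal J_n$) so that the leading nonvanishing coefficient is genuinely nonzero. Your route is more linear-algebraic: factor the entire Jacobian as $V_\tau\,A(p)+R$, left-multiply by $V_\tau^{-1}=\mathrm{diag}(j!/\tau^j)(V_1)^{-1}$, and check that the amplified remainder is still $O(\tau)$ row by row. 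This is cleaner and sidesteps the index-selection combinatorics; the paper's version, in exchange, makes explicit which $m$-subset of sample times witnesses the rank and how the minimal-degree $J$ governs the leading term of the determinant.

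Two small points you should make explicit when writing it up. First, when you recenter near $k_0=\lfloor s_p/\tau\rfloor$, it is cleanest to expand about $k_0\tau$ rather than $s_p$ itself, invoking openness of the rank-$m$ condition so that $A(\psi_{k_0\tau}(p))$ still has full rank once $\tau$ is small; otherwise the Vandermonde nodes are shifted and the $V_1\,\mathrm{diag}(\tau^j/j!)$ factorization no longer applies verbatim. Second, when you later shrink $\tau$ for global injectivity you must simultaneously enlarge $N$ so that $N\tau$ still exceeds the uniform bound on the shift times $s_p$ and the separating times $s_i$; the paper records this as invariance of the immersion conclusion under $(N,\tau)\mapsto(Nd,\tau/d)$.
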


    \begin{proof} For $\Psi_\tau^N$ to be an immersion, the cotangent vectors
    \[dG|_p, \hspace{1mm} d(G\circ\psi_\tau)|_p, \hspace{1mm}d(G\circ\psi_{2\tau})|_p, \hspace{1mm}\ldots, \hspace{1mm}d(G\circ\psi_{N\tau})|_p\in T^*_pM\]
    must span an $m$-dimensional space for all $p \in M$. Equivalently, for any point $p\in M$ there must be a strictly increasing $m$-tuple $I=(i_1, i_2, \ldots, i_m)\in \ZZ_{\geq 0}^m$ of indices such that the determinant $m$-form $\bigwedge d(G\circ\psi_{i_k\tau})$ does not vanish at $p$, i.e.
    \[\omega_{p}^I(\tau) := \bigwedge_{i_k\in I} d(G\circ\psi_{i_k\tau})|_{p} \neq 0.\]
    We require that $I$ be strictly increasing because a wedge product containing identical factors is zero.

    The idea is to perform a convolution of the Taylor series of the cotangent curves $d(G\circ\psi_{i_kt})$ and to use condition 1 above to choose sufficiently small $\tau$ so that $\omega_p^I(\tau)\neq 0$ for some $I$. By compactness of $M$, one makes a uniform choice of small $\tau$ so that $\Psi_\tau$ is immersive and each observation curve is distinguished on some integer multiple of $\tau$, thereby making $\Psi_\tau^N$ injective.

    Let $s\geq 0$ be a time parameter for $p$ such that
    \[\Ld_{X}^{\wedge J}dG\]
    is nonzero at $\gamma_p(s)$. Write $\tilde{p} = \gamma_p(s)$ and $\mathcal{J}_n$ for the set of all strictly increasing $m$-tuples $J = (j_1, j_2, \ldots, j_m)$ with degree
    \[j_1 + j_2 + \ldots + j_m = n\] satisfying \[\Ld_{X}^{\wedge J}dG|_{\tilde{p}} \neq 0\]
    Fix $n> 0$ to be the minimal integer for which $\mathcal{J}_n$ is nonempty (possible by condition 1 above).

    Let $A(t)$ be the $m$ by $(n+1)$ matrix with $(k, j)$\ts{th} entry
    \[A_{k,j}(t) = \frac{i_k^{j-1}(t-s)^{j-1}}{(j-1)!}\]
    and $L:T_{\tilde{p}}M \rightarrow \RR^{n+1}$ the linear map given by $\Ld_X^{j-1}dG|_{\tilde{p}}$ in the $j$\ts{th} coordinate,
    \[
    L = \left(dG|_{\tilde{p}}, \Ld_X^1 dG|_{\tilde{p}}, ..., \Ld_X^n dG|_{\tilde{p}}\right).
    \]
    
    So the $k$\ts{th} component of the composition $A(t)\circ L$, viewed as an $m$-tuple of $t$-dependent cotangent vectors, yields the $n$\ts{th} order Taylor polynomial about $t=s$ of the cotangent curve $d(G\circ\psi_{i_kt})|_{p}$:
    \[
    A(t) \circ L = \sum_{j=0}^n \frac{i_k^j (t-s)^j}{j!}\Ld_X^j dG|_{\tilde{p}}.
    \]

    By Cauchy-Binet formula applied to $A(t)$ and $L$, the top exterior product
    \[\omega_{p}^I(t)= \bigwedge_{i_k\in I} d(G\circ\psi_{i_kt})|_{p}
    \]
    has $n$\ts{th} order Taylor series expansion about $t=s$ with $n$\ts{th} coefficient
    \[C_n = \frac{\det(V)}{a_n} \sum_{J\in \mathcal{J}_n}|I^{J}|\cdot \Ld_{X}^{\wedge J}dG|_{\tilde{p}}\]
    where
    \begin{itemize}
    \item$a_n$ is a nonzero constant depending only on $n$
    \item $|I^J| = \prod i_k^{j_k-k+1}$
    \end{itemize}
    and \[\det(V) = \prod_{k<k'} (i_{k'}-i_k) \neq 0\]
    is the nonzero determinant of the $m\times m$ Vandermonde matrix $V$ with $(k,j)$\ts{th} entry
    \[V_{k, j}={i_k}^{j-1}\]
    where we take $0^0 = 1$.

     By the minimality assumption on $\mathcal{J}_n$, all the lower degree Taylor coefficients, which contain $\Ld_X^K dG|_{\tilde{p}} = 0$ for $m$-tuples $K$ with degree strictly less than $n$,
    \[C_j = 0 \text{ for $j<n$}\]
    are zero. So the Taylor expansion of $\omega_{p}^I(t)$ has the form
    \[\omega_{p}^I(t) = (t-s)^nC_n + R_p^n(t)\]
    where $R_p^n(t)$ is the $n$\ts{th} order Taylor error term with vanishing limit
    \[\lim_{t\to s}\frac{R_p^n(t)}{(t-s)^n} = 0\]

    For for suitable choice of $I$, there will be a dominating term in the sum over $\mathcal{J}_n$ such that $C_n$ is nonzero. For $\tilde{J}\in\mathcal{J}_n$ the colexigraphically maximal element of $\mathcal{J}_n$, let $a$ be the maximal index such that $j_a < \tilde{j}_a$, for all $J < \tilde{J}$. Choose $I$ by making all terms right of $a-1$ large, so that
    \[|I^{J}| << |I^{\tilde{J}}|\]
    for all $J < \tilde{J}$
    \[\sum_{J\in \mathcal{J}_n}|I^{J}|\cdot \Ld_{X}^{\wedge J}dG|_{\tilde{p}} \neq 0.\]
    So the $n$\ts{th} Taylor coefficient
    \[C_n \neq 0\]
    is nonzero.
     Hence we may choose a time $\eta>s$ sufficiently close to $s$ so that the Taylor error $R_p^n(\eta)$ is small and the inequality
    \[\omega_{q}^I(\eta) \neq 0\]
    holds for all $q$ in a neighborhood of $p$, and this property remains invariant under shrinking $\eta$ closer to $s$. By compactness of $M$ there is a finite collection of triples $(I_r, \eta_r, s_r)$ such that the collection of $m$-forms
    \[\{\omega^{I_r}(\eta_r)\}\]
    do not all vanish at any given point of $M$ and the cotangent vectors
    \[\{d(G\circ\psi_{i_k\eta_r})|_q\}_{i_k \in I_r}\]
    specified by $I_r$ are linearly independent. Choose $\tau>0$ small enough so that there is an integer multiple of $\tau$ lying in the interval $(s_r, \eta_r)$ for each $r$. Then the Takens map $\Psi_\tau^N$ is an immersion for all $N>0$ bounding $I_r$ and $\eta_r/\tau$.

    So $\Psi_\tau^N$ is locally injective and the difference map
    \[\Psi_\tau^N(p) - \Psi_\tau^N(q)\]
    does not vanish for all $p\neq q$ in an open neighborhood $U$ of the diagonal in $M\times M$, and this property is invariant under scaling $N\mapsto Nd$ and $\tau\mapsto \tau/d$ for an integer $d>0$ (with $U$ fixed).

    For distinct $(p, q)\in M\times M\setminus U$, we may shrink $\tau$ so that $g_p$ and $g_q$ are distinguished on some integer multiple of $\tau$ and $\Psi_\tau(p) \neq \Psi_\tau(q)$. By compactness of $M\times M\setminus U$, there is a uniform choice of $\tau$ and $N$ making $\Psi_\tau^N$ injective, hence an embedding.

    \end{proof}

    \begin{remark}
    While one can provide a lower bound for the dimension $N$ needed to yield a Takens embedding, the formula depends in a complicated way on $G$ and $X$. In practice, choosing sufficiently large $N$ and small $\tau$ amounts to a dense sampling of a discrete time series.
    \end{remark}

    \section{An Application to Surfaces via Fourier theory}
    \label{sec:fourier}

    Now that we have our theory in hand, we can examine another class of observation functions which are constructed from Fourier modes, in addition to our distance-based observation functions in Section~\ref{sec:distancesobs}.

    \subsection{The Torus}



    We start by characterizing all smooth observations $G:\TT \rightarrow \RR$ for a vector field $X$ of irrational flow
    \[\psi_t(x,y) = (x+\alpha t, y+\beta t)\]
    yielding toroidal delay embedding. For $G$ write the Fourier expansion
    \[G(x,y) = \sum_{(n,m)\in \ZZ^2} \hat{G}(n,m)\cdot \exp(i(nx+my))\]
    where $\hat{G}(n,m)\in\CC$ is the $(n,m)$\ts{th} Fourier coefficient of $G$. Set
    \[\Supp\hat{G} = \{ (n,m) \in \ZZ^2 \mid \hat{G}(n,m) \neq 0 \}\]
    the \textit{support} of $\hat{G}$.
    \begin{theorem}\label{thm:fourier}
    A smooth function $G:\TT \rightarrow \RR$ is a good observation for an irrational winding if and only if the support $\Supp\hat{G}$ of the Fourier coefficients generates $\ZZ^2$ as an abelian group.
    \end{theorem}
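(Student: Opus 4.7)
Write $\lambda_{n,m} := n\alpha+m\beta$; by irrationality of the winding these values are pairwise distinct over $(n,m)\in\ZZ^2$, and the observation curve factors as $g_p(t)=\sum\hat{G}(n,m)e^{i(nx+my)}e^{i\lambda_{n,m}t}$. Differentiating the Fourier series of $\Ld_X^kG$ produces
\[\Ld_X^kdG\big|_p = i\sum_{(n,m)\in\Supp\hat{G}} \hat{G}(n,m)(i\lambda_{n,m})^k e^{i(nx+my)}(n\,dx+m\,dy).\]
The plan is, for the ``only if'' direction, to exhibit a translation of $\TT$ preserving $G$ that produces two distinct points with identical observation curves, and for the ``if'' direction, to verify the two conditions of Theorem~\ref{thm:main} by Fourier-mode matching.

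\textbf{Only if.} Suppose the subgroup $H:=\langle\Supp\hat{G}\rangle$ of $\ZZ^2$ is proper. Then by Pontryagin duality the annihilator
\[H^\perp := \bigl\{(u_1,u_2)\in\TT : nu_1+mu_2\in 2\pi\ZZ\ \text{for all}\ (n,m)\in H\bigr\}\]
is a nontrivial subgroup of $\TT$. For any nonzero $(u_1,u_2)\in H^\perp$, translation by $(u_1,u_2)$ multiplies the $(n,m)$ Fourier mode of $G$ by $e^{i(nu_1+mu_2)}=1$ on $\Supp\hat{G}$, so $G$ itself is invariant under this translation. Hence for every $p$ the point $q:=p+(u_1,u_2)\neq p$ satisfies $g_q\equiv g_p$, violating condition 2 of Theorem~\ref{thm:main}.

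\textbf{If.} Assume $\Supp\hat{G}$ generates $\ZZ^2$. For condition 2, if $g_p\equiv g_q$ with $p=(x,y)$ and $q=(x',y')$, distinctness of the frequencies $\lambda_{n,m}$ permits termwise Bohr--Fourier matching, forcing $n(x-x')+m(y-y')\in 2\pi\ZZ$ for every $(n,m)\in\Supp\hat{G}$ and hence for every $(n,m)\in\ZZ^2$; taking $(1,0)$ and $(0,1)$ gives $p=q$ on $\TT$. For condition 1, I claim $\{\Ld_X^kdG|_p\}_{k\geq 0}$ spans $T_p^*\TT$ at every $p$, so at $p$ itself some $\Ld_X^{j_1}dG|_p,\Ld_X^{j_2}dG|_p$ are linearly independent and $\Ld_X^{j_1}dG\wedge\Ld_X^{j_2}dG$ is nonzero at $p$. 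Pairing a tangent vector $u=u_1\partial_x+u_2\partial_y$ with each $\Ld_X^kdG|_p$ yields $\sum_{(n,m)\in\Supp\hat{G}} a_{n,m}(i\lambda_{n,m})^k=0$ for all $k\geq 0$, where $a_{n,m}:=\hat{G}(n,m)e^{i(nx+my)}(nu_1+mu_2)$. The hard part will be passing from these infinitely many identities to $a_{n,m}=0$ for each $(n,m)$: I would package them into the entire function $F(z):=\sum a_{n,m}e^{i\lambda_{n,m}z}$, absolutely convergent since rapid decay of $\hat{G}$ dominates the polynomial growth of $|\lambda_{n,m}|^k$, whose Taylor coefficients at $0$ are exactly these sums. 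Then $F\equiv 0$, and uniqueness of a convergent generalized exponential sum with distinct exponents forces every $a_{n,m}=0$. Since $\hat{G}(n,m)e^{i(nx+my)}\neq 0$ on the support, this gives $nu_1+mu_2=0$ for all $(n,m)\in\Supp\hat{G}$, hence for all $(n,m)\in\ZZ^2$, so $u=0$. The principal subtlety in the whole argument is precisely this exponential-sum uniqueness step when $\Supp\hat{G}$ is infinite; smoothness of $G$ and the resulting rapid decay of $\hat{G}$ is what makes the entire-function packaging legitimate.
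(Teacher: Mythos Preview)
Your argument follows the paper's strategy closely: establish the two hypotheses of Theorem~\ref{thm:main} from the Fourier decomposition of $\Ld_X^kG$, and for the converse produce a nontrivial translation of $\TT$ fixing $G$. Your ``only if'' via Pontryagin duality and your condition~2 via uniqueness of Bohr--Fourier expansions are correct and are tidy reformulations of the paper's structure-theorem and Vandermonde-inversion steps.

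The issue is in condition~1, precisely where you flag it. You assert that $F(z)=\sum a_{n,m}e^{i\lambda_{n,m}z}$ is entire because ``rapid decay of $\hat G$ dominates the polynomial growth of $|\lambda_{n,m}|^k$'', but that reasoning only shows that each individual Taylor coefficient $\sum a_{n,m}(i\lambda_{n,m})^k$ converges absolutely. For $F$ to extend holomorphically off the real line you need $\sum |a_{n,m}|\,e^{|\lambda_{n,m}|\,|\mathrm{Im}\,z|}<\infty$, which requires \emph{exponential} decay of $\hat G(n,m)$; smoothness of $G$ gives only super-polynomial decay. So for a merely smooth $G$, $F$ is just a smooth real-variable function with $F^{(k)}(0)=0$ for all $k$, and that alone does not force $F\equiv 0$. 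The paper's own argument has the same soft spot---its equation~\eqref{eq:inf-lin-combo} inverts an infinite Vandermonde system without any convergence discussion---so you have matched the paper's level of rigor and even located the weak point more honestly; but your stated resolution is incorrect. One route toward a repair: since every orbit of the irrational flow is dense and the rank condition is open, condition~1 of Theorem~\ref{thm:main} only asks that $\Ld_X^{j_1}dG\wedge\Ld_X^{j_2}dG$ be nonzero \emph{somewhere} on $\TT$, which is weaker than the pointwise spanning you attempt.
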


    \begin{proof}\label{pf:fourier}
    Write $e_{n,m} = \exp(i(nx+my))$ for the $(n,m)\ts{th}$ Fourier basis element. The $k$-fold Lie derivative $\mathcal{L}_X^kG$ has Fourier coefficient
    \[\widehat{\mathcal{L}_X^kG}(n,m) = i^k(n\alpha + m\beta)^k\cdot \hat{G}(n,m)\]
    and thus Fourier expansion
    \[\mathcal{L}_X^kG = \sum_{(n,m) \in \ZZ^2} i^k(n\alpha + m\beta)^k\hat{G}(n,m)\cdot e_{n,m}\]
    Since $\alpha/\beta$ is irrational, the coefficients
    \[c_{n,m} = i\cdot(n\alpha + m\beta)\]
    are nonvanishing and pairwise distinct. Therefore the Vandermonde matrix with $(n,m)\times j\ts{th}$ entry
    \[(c_{n,m}^k)\]
    is nonsingular and the projection
    \[G*e_{n,m} = \hat{G}(n,m)\cdot \exp(i(nx+my))\]
    can be written as an infinite sum
    \begin{equation}\label{eq:inf-lin-combo}\hat{G}(n,m)\cdot \exp( i(nx+my) = \sum_{j=0}^\infty b_j\Ld_X^jG\end{equation}
    Hence the values of $\Ld_X^kG$ on a point $(u,v)\in \TT$ uniquely determine
    \[\hat{G}(n,m)\cdot e^{i(nu+mv)}\]
    If $\Supp\hat{G}$ generates $\ZZ^2$, then there is some finite product
    \[\prod_{(n_j,m_j)\in \Supp\hat{G}}e^{i(n_ju+m_jv)} = e^{iu}\]
    and thus $u$, and similarly $v$, are uniquely determined modulo $2\pi$ by the observation curve $G\circ\gamma_{u,v}$ and condition 2 of Theorem \ref{thm:main} above is satisfied.

    If $d\Ld_X^jG\wedge d\Ld_X^kG$ vanishes at $p$ for all $j,k\geq 0$, then by equation \ref{eq:inf-lin-combo} above, the $2$-form
    \[d(G*e_{n,m})\wedge d(G*e_{n',m'}) = \det\begin{pmatrix}n & m \\n' & m'\end{pmatrix}\hat{G}(n,m)\hat{G}(n',m')\cdot e_{n,m} e_{n',m'}\]
    also vanishes at $p$ for all pairs $(n,m), (n',m')\in \ZZ^2$. Thus
    \[\det\begin{pmatrix}n & m \\n' & m'\end{pmatrix} = 0 \textrm{  for all $(n,m), (n',m') \in \Supp \hat{G}$}\]
    and $\Supp\hat{G}$ cannot generate $\ZZ^2$. So condition 1 of Theorem \ref{thm:main} is satisfied if $\Supp\hat{G}$ generates $\ZZ^2$.

    Conversely, suppose $\Supp\hat{G}$ does not generate $\ZZ^2$. By the classification of finitely generated abelian groups, there is a $\ZZ$-basis
    \[(n_1,m_1), (n_2, m_2)\]
    for $\ZZ^2$ such that $\Supp\hat{G}$ is generated by
    \[a\cdot (n_1, m_1), b\cdot(n_2,m_2)\]
    where $a$ and $b$ are integers not both $\pm 1$. Then there is some $(u,v) \notin 2\pi\ZZ^2$ such that
    \[\begin{pmatrix}an_1 & am_1 \\bn_2 & bm_2\end{pmatrix}\cdot \begin{pmatrix}u \\ v\end{pmatrix}\]
    takes values in $2\pi\ZZ$, so that $\exp(i(nu+mv)) = 1$ for all $(n,m) \in \Supp\hat{G}$. So for any point $(x,y)\in \TT$, $(x+u,y+v)\in \TT$ is a distinct point with the same observation curve, and no Takens map can distinguish between $(x,y)$ and $(x+u, y+v)$.
    \end{proof}
    
    \begin{remark}
        Theorem \ref{thm:fourier} can be strengthened to include rational windings. In this case one cannot expect the delay mapping to recover all Fourier modes of an observation function, but only those which are coprime to the slope of the winding.
    \end{remark}

    \begin{remark}
        For the irrational winding on the torus, the Koopman eigenfunctions are given by the Fourier basis. The Vandermonde inversion in equation (\ref{eq:inf-lin-combo}) above shows that the Fourier modes of an observation are determined by its delay mapping.  We are not aware of such a connection between Takens and Koopman, though it seems natural in this context.
    \end{remark}

    By Theorem \ref{thm:fourier}, whether or not $G$ is good for an irrational flow depends only on the support $\Supp\hat{G}$. The quasiperiodic function
    \begin{equation}
    \label{eq:torusirrationalexample}
    g(t) = \cos \sqrt{2} t  + \cos t
    \end{equation}
    is the observation of $G(x,y) = \cos(x)+\cos(y)$ along the irrational flow $(\sqrt2t, t)$ on the planar torus $\TT=\RR^2/2\pi\ZZ^2$. A point cloud densely sampled from the sliding window $\SW_1^{10}g(t)$ coordinates given by 10 uniform shifts of $g(t)$ yields a curve in $\RR^{10}$ with toroidal persistence.
    \begin{figure}[!htb]
        \centering
        \includegraphics[width=\textwidth]{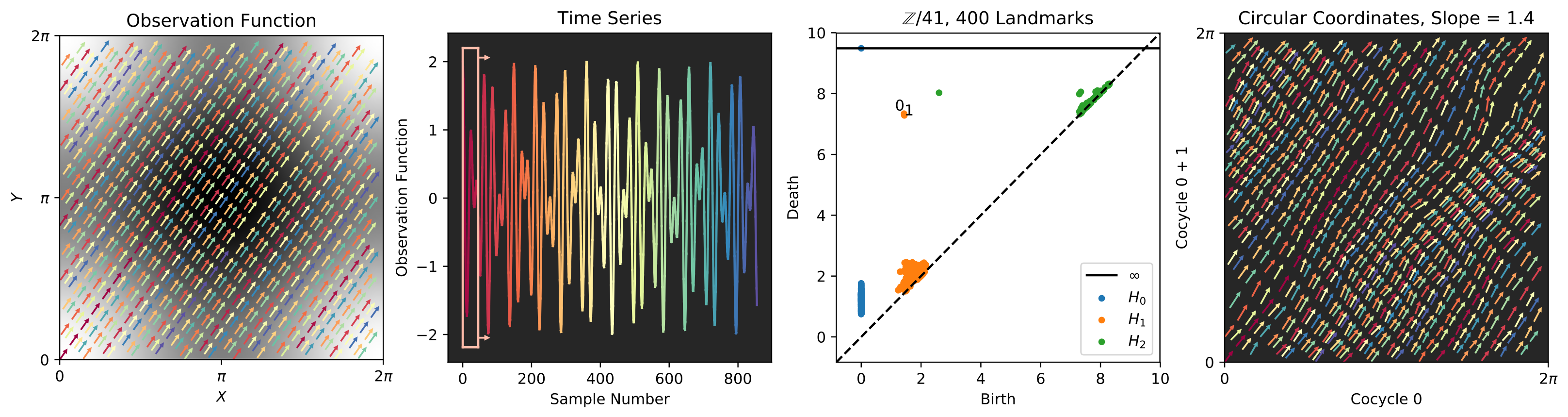}
        \caption{The observation function $\cos(x) + \cos(y)$ for the same flow as Figure~\ref{fig:TorusDist}}
    \end{figure}

    \subsection{The Klein bottle}
    \label{sec:kleinbottle}

    As in our example in Section~\ref{sec:projcoords}, we write the Klein bottle $\KK$ as the quotient of the torus $\TT$ by the automorphism $\kappa: (x,y) \mapsto (x+\pi, -y)$. The irrational flow on the $\TT$ is not $\kappa$-invariant since $\kappa$ is orientation reversing in the $y$ coordinate. To approximate the shallow flow in Figures \ref{fig:KleinDist} and \ref{fig:KleinDistFail} above, we construct a vector field which flows cyclically along a repellor $y=0$ and an attractor $y=\pi$ by restricting a linear flow to the fundamental domain $[0,2\pi]\times[0, \pi]$ and flatten it out on the boundary circles $y=0, \pi$. For $\alpha,\beta\in \RR$ with $0 < \alpha/\beta<<1$ irrational, let $X_\epsilon$ be a vector field on the rectangle given by
    \[ X_\epsilon(x,y)= \begin{cases}
        (\alpha, \rho(y)) & 0 \leq y \leq \epsilon\\
        (\alpha, \beta) & \epsilon < y \leq \pi-\epsilon \\
        (\alpha, \rho(\pi+\epsilon-y)) & \pi-\epsilon < y \leq \pi
    \end{cases}
    \]
    where $\rho$ is a smooth function on a neighborhood of $[0, \epsilon]$ with $\rho(0) = 0$, $\rho(\epsilon) = \beta$ making $X_\epsilon$ smooth. For example, $\rho = \beta\exp(1/(y/\epsilon-1)^2-1)$. Then $X_\epsilon$ extends uniquely to a $\kappa$-invariant vector field on on $\TT$, and therefore induces a vector field on $\KK$.


    \begin{theorem}
    \label{thm:klein}
    Let $G:\TT \rightarrow \RR$ be a $\kappa$-invariant function on $\TT$. For fixed $N\tau$, the Takens map
    \[\Psi_\tau^N: \KK \rightarrow \RR\]
    induced by $G$ and $X_\epsilon$ for arbitrarily small $\epsilon$ and slope $\alpha/\beta<<1$ is an embedding if and only if the following conditions hold:
    \begin{enumerate}
        \item $G(x,\pi)$ and $G(x,0)$ have period $\pi$ in $x$ and do not differ by a shift
        \item $\Supp \hat{G}$ generates $\ZZ^2$
    \end{enumerate}
    \end{theorem}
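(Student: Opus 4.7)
My plan is to verify the two hypotheses of Theorem~\ref{thm:main} for $(\KK,G,X_\epsilon)$ by treating separately the open interior $\{0<y<\pi\}$, where $X_\epsilon$ agrees with the pure irrational winding $(\alpha,\beta)$ outside the thin collars of width $\epsilon$, and the two $\kappa$-invariant boundary circles $y=0,\pi$, on which the flow degenerates to pure $x$-translation. For the reverse direction I will exhibit explicit pairs of points in $\KK$ that collapse under $\Psi^N_\tau$ whenever one of the listed conditions fails.

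\textbf{Interior.} On $\{\epsilon<y<\pi-\epsilon\}$ the vector field is literally the irrational winding, so the Vandermonde-inversion argument in the proof of Theorem~\ref{thm:fourier} applies verbatim: condition~2 recovers each $\hat{G}(n,m)\,e^{i(nx+my)}$ from $\{\Ld_X^k G(p)\}_{k\ge 0}$, which simultaneously supplies the derivative-rank hypothesis of Theorem~\ref{thm:main}(1) at every interior point and separates $\TT$-orbits modulo $\kappa$---exactly the identification encoded by $\KK=\TT/\kappa$ together with the $\kappa$-invariance of $G$ and $X_\epsilon$. Letting $\epsilon\to 0$ extends the argument by continuity to every interior point.

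\textbf{Boundary.} This is where I expect the main technical difficulty. Each orbit on $\{y=0\}$ is the periodic loop $t\mapsto(x+\alpha t,0)$, so the rank condition must draw entirely on the transverse data carried by the bump slope $\rho'(0)\neq 0$. Using $\Ld_X dG=d(XG)$ with $XG=\alpha\partial_x G+\rho(y)\partial_y G$, a direct calculation gives
\[
\Ld_X dG\big|_{y=0} \;=\; \alpha\,\partial_x^2 G\,dx \;+\; \bigl(\alpha\,\partial_x\partial_y G + \rho'(0)\,\partial_y G\bigr)\,dy,
\]
so that $dG\wedge\Ld_X dG|_{y=0}$ is dominated, in the regime $\alpha/\beta\ll 1$, by $\rho'(0)\,\partial_x G\,\partial_y G\,dx\wedge dy$. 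A Fourier computation using the relation $\hat{G}(n,-m)=(-1)^n\hat{G}(n,m)$ forced by $\kappa$-invariance gives
\[
\partial_y^j G(x,0) \;=\; 2\,i^j\sum_{n\text{ odd}} e^{inx}\sum_{m>0} m^j\,\hat{G}(n,m)\qquad (j\text{ odd}),
\]
and since condition~2 forces $\Supp\hat{G}\not\subset 2\ZZ\times\ZZ$ (a lattice of index $2$ in $\ZZ^2$), some odd $n$ carries a nonzero Fourier coefficient; a Vandermonde argument in the $m$-exponent then produces an odd $j$ for which $\partial_y^j G(\cdot,0)\not\equiv 0$. Combining this with the nontriviality of $\partial_x G(\cdot,0)$ supplied by the minimal-period-$\pi$ clause of condition~1 and, if necessary, higher-order wedges $\Ld_X^{j_1}dG\wedge\Ld_X^{j_2}dG$, yields a point on the circle at which the wedge is nonzero, establishing Theorem~\ref{thm:main}(1). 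For injectivity condition~1 plays a triple role: the period-$\pi$ minimality separates distinct $\KK$-points on a single boundary circle, the ``not a shift'' clause separates $\{y=0\}$ from $\{y=\pi\}$, and interior orbits are distinguished from boundary orbits by asymptotics, since every interior orbit accumulates onto \emph{both} boundary circles as $t\to\pm\infty$ and therefore records both limit profiles $G(\cdot,0)$ and $G(\cdot,\pi)$, whereas a boundary orbit records only one.

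\textbf{Reverse direction.} If $\Supp\hat{G}$ fails to generate $\ZZ^2$, the closing argument of the proof of Theorem~\ref{thm:fourier} produces a nontrivial shift $(u,v)$ under which every observation curve is invariant, violating injectivity on the interior. If $G(x,0)$ and $G(x,\pi)$ differ by a shift $c$, then $(x,0)$ and $(x+c,\pi)$ are distinct points of $\KK$ with identical observation curves; if a boundary restriction has minimal period strictly less than $\pi$, distinct $\KK$-points on the same boundary circle collapse. In each case $\Psi^N_\tau$ is not injective, so conditions~1 and~2 are necessary as well as sufficient.
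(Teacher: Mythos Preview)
Your architecture matches the paper's: split $\KK$ into the open interior and the two invariant circles $y=0,\pi$, invoke the Fourier/Vandermonde argument of Theorem~\ref{thm:fourier} on the former, treat the latter by hand, and exhibit colliding pairs for necessity. Three execution points differ, and two of them are small but real gaps in your write-up.

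\emph{Collar points.} You dispose of $\{0<y<\epsilon\}\cup\{\pi-\epsilon<y<\pi\}$ by ``letting $\epsilon\to 0$ by continuity,'' but the claim is for a \emph{fixed} small $\epsilon$, so this does not literally make sense. The paper's device is cleaner and uses the actual statement of Theorem~\ref{thm:main}: condition~(1) there only asks that the wedge be nonzero at \emph{some} point of the integral curve through $p$. Since $\dot y=\rho(y)>0$ on $(0,\epsilon)$, every orbit with $0<y<\pi$ enters the band $\{\epsilon<y<\pi-\epsilon\}$ of pure irrational slope in finite time, and the torus argument applies there verbatim.

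\emph{Boundary immersion.} Your computation leans on $\rho'(0)\neq 0$, but the construction of $X_\epsilon$ imposes only $\rho(0)=0$, $\rho(\epsilon)=\beta$, and smoothness; for a standard bump transition one typically has $\rho'(0)=0$. Your hedge ``if necessary, higher-order wedges'' would then need the first nonvanishing $\rho^{(k)}(0)$ to carry the transverse data, and that changes which $\Ld_X^{j}dG$ contribute---none of which is worked out. The paper avoids the issue entirely with a short contradiction: if $\Psi_\tau^N$ were not immersive on $y=0$ then $\partial_yG(\cdot,0)\equiv 0$, forcing failure of immersion on a neighborhood of the circle and contradicting the interior immersion already established.

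\emph{Separating boundary from interior.} Your asymptotic argument (interior orbits accumulate on both boundary circles and hence ``record both limit profiles'') is valid but heavier than necessary. The paper simply observes that an observation curve on $y=0$ or $y=\pi$ is \emph{periodic}, since the flow there is pure $x$-rotation, whereas no interior observation curve is periodic; this already separates the two regimes.
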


    \begin{proof}
    Suppose $G$ is good for $X_\epsilon$. Since $X_\epsilon$ flows horizontally at $y=0,\pi$, condition 1) must hold so that each point is uniquely determined by its observation curve. Condition 2) must hold as well, since $X_\epsilon$ is given by an irrational winding away from
    the $\epsilon$-neighborhood of $y=0,\pi$ and the same argument as in Theorem \ref{thm:fourier} above applies for sufficiently shallow slope $\alpha/\beta$ because $N\tau$ is fixed.

    Conversely, suppose conditions 1) and 2) hold. $X_\epsilon$ is given by an irrational flow away from the $\epsilon$-neighborhood of $y=0,\pi$. Furthermore, any point in the $\epsilon$-strip with $y\neq 0,\pi$ may be flowed to a point where $X_\epsilon$ has irrational slope. The same argument as in Theorem \ref{thm:fourier} shows that the Takens map restricts to an embedding on $y\neq 0, \pi$.

    By condition 1, the observation curve of a point $(x,y)$ where $y=0,\pi$ uniquely determines $x$ modulo $\pi$, and is periodic and therefore distinct from any observation curve for $y\neq 0,\pi$. So each point is uniquely determined by its observation curve as per condition 2) of Theorem \ref{thm:main}.

    It remains to show that the Takens map is immersive at $y=0,\pi$. If not, then $\frac{\partial G}{\partial y}$ vanishes on the circles $y=0,\pi$, a neighorhood about which $\Psi_\tau^N$ would fail to immerse, a contradiction.
    \end{proof}

    According to Theorem \ref{thm:klein}, the ``simplest'' $\kappa$-symmetric good observation is
    \begin{equation}
    \label{eq:kleinfourier}
    G(x,y) = \cos 2x + \cos x\sin y + \cos y.
    \end{equation}
    Indeed, the Fourier coefficients of $G$ are supported at $(\pm2,0), (\pm1, \pm1), (0, \pm1)$, which generates $\ZZ^2$. Along the limit cycles we have $G(x, 0) = \cos 2x + 1$ and $G(x,0) = \cos 2x - 1$, which are distinct and doubly periodic.

    Intuitively, the $\cos 2x$ term is responsible for delay-mapping the limit cycles $y=0,\pi$ via a double covering.  Without this term, the boundary $G(x, 0) = 1$, $G(x, \pi) = -1$ along the bottom and top boundaries, respectively.  Not only are these boundaries no longer identified, but they also each map to a single point, turning the Klein bottle into a sphere.  The delay mapping of $\cos x\sin y$ fills two M\"{o}bius strips in conjuction with $\cos(2x)$, while the $\cos(y)$ term serves to ``separate'' the M\"{o}bius strips, as shown in the right hand side of Figure \ref{fig:eyesoftheworld}.

    We can also see this by parameterizing the flow by a single variable $t = x$ and examining the time series directly.  In this case, the time series is
    \begin{equation}
    g(t) = \cos(2t) + \cos(t) \sin \left( \frac{\alpha}{\beta} t \right) + \sin \left( \frac{\alpha}{\beta} t \right)
    \end{equation}
    for $\epsilon < \frac{\alpha}{\beta} t < \pi - \epsilon$.  Over small ranges of $t$, the sine terms are approximately constant.  The time series is then of the form $\cos(2t) + a \cos(t), |a| < 1$; that is, its sliding window embedding locally parameterizes the boundary of a M\"{o}bius strip \cite{perea2015sliding}.  As it moves further along, $a$ changes, and so it fills out the strip.

    \begin{figure}[!htb]
        \centering
        \includegraphics[width=\textwidth]{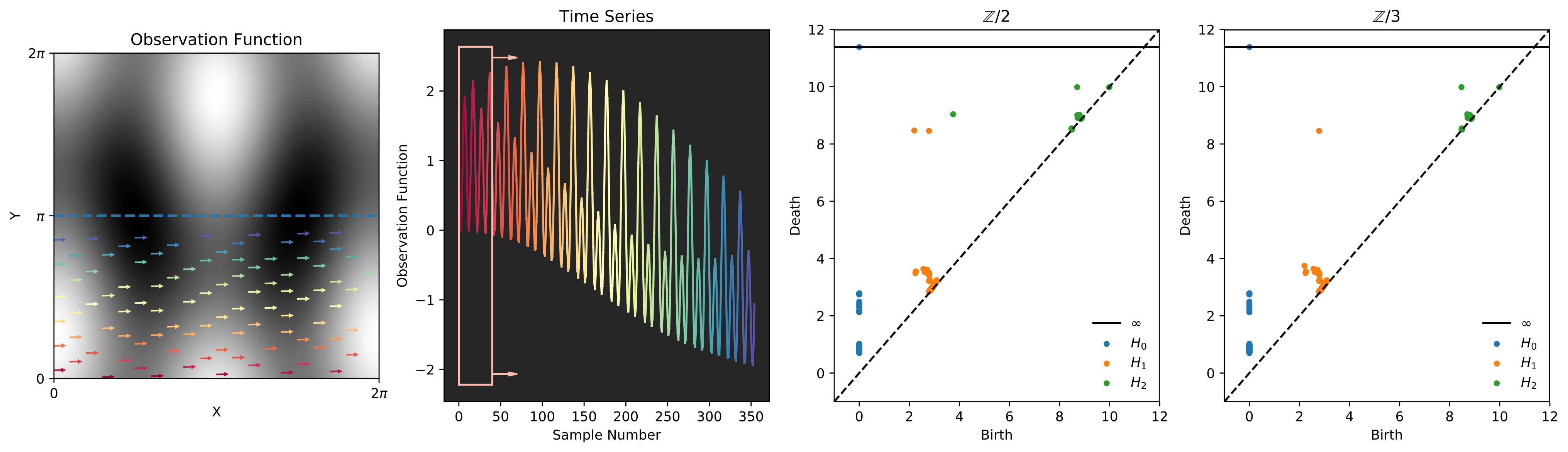}
        \caption{The observation function $G(x,y)$ from Equation~\ref{eq:kleinfourier} for the same flow as Figure~\ref{fig:KleinDist}}. Indeed the good observation function reproduces $\mathbb{K}$, as evidenced by the persistence diagram.
    \end{figure}

    \begin{figure}[!htb]
        \label{fig:eyesoftheworld}
        \centering
        \includegraphics[width=\textwidth]{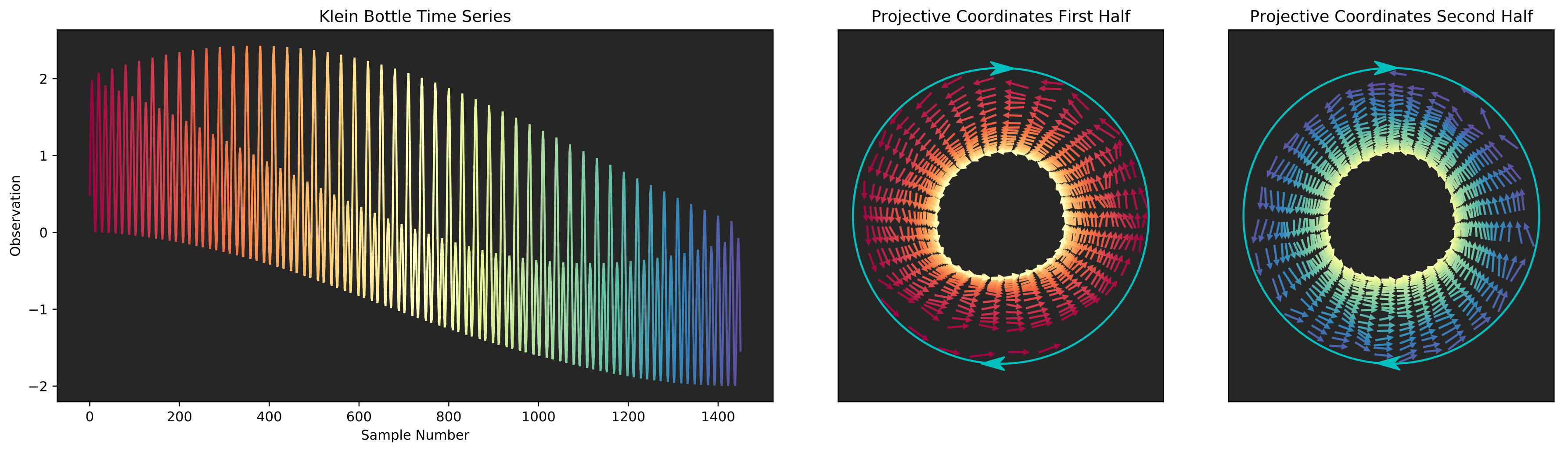}
        \caption{Projective coordinates for a Klein bottle with observation function specified in Equation~\ref{eq:kleinfourier}.  We plot the first half in the second subplot, which traces a M\"{o}bius strip from its core to its boundary, shown in yellow.  Then, that boundary is glued to a second M\"{o}bius strip, which corresponds to the second half of the time series, as shown on the right.}
    \end{figure}

    \section{Discussion}

    It is clear what circular and toroidal observations look like in the time domain, and as we have mentioned, there are many applications that take advantage of this knowledge.  The theory developed in this paper has enabled us to move beyond this and to develop examples of signals recovering other manifolds.
    
    Also, by showing the existence of time series whose ``attractors'' are on twisted spaces, we also provide further motivation for TDA time series users to move beyond exclusively using $\mathbb{Z} / 2 \mathbb{Z}$ in TDA.  The latter is the default option across most applications of TDA in time series analysis, but it is possible that these pipelines are blind to important features, as some of our examples show.  
    
    Moreover, just as circular and toroidal sliding window embeddings have interpretations in terms of physical phenomena, the presence of Klein bottles, Moebius strips, spheres, projective planes, etc, should also have practical meaning. It is unlikely that one could recognize the significance of these time series in the wild without such examples in hand, and being primed as such makes it more likely that we will be able to discover physical examples where non-orientable state spaces are natural.
    
    Finally, we note that not only do we have a method for producing time series recovering other manifolds, which we have validated empirically using persistent homology and Eilenberg MacClane coordinates, but the method is backed by a theorem that indicates exactly when it will succeed/fail.


    \bibliographystyle{plain}
    \bibliography{takens-article}

\begin{thebibliography}{10}

\bibitem{koopmanbackground}
Hassan Arbabi.
\newblock Introduction to koopman operatory theory of dynamical systems.
\newblock 2018.

\bibitem{ripser}
Ulrich Bauer.
\newblock Ripser: a lean c++ code for the computation of vietoris-rips
  persistence barcodes.
\newblock {\em Software available at https://github. com/Ripser/ripser}, 2017.

\bibitem{bello2011measuring}
Juan~P Bello.
\newblock Measuring structural similarity in music.
\newblock {\em IEEE Transactions on Audio, Speech, and Language Processing},
  19(7):2013--2025, 2011.

\bibitem{briefer2015segregation}
Elodie~F Briefer, Anne-Laure Maigrot, Roi Mandel, Sabrina~Briefer Freymond,
  Iris Bachmann, and Edna Hillmann.
\newblock Segregation of information about emotional arousal and valence in
  horse whinnies.
\newblock {\em Scientific reports}, 4:9989, 2015.

\bibitem{carlsson2009topology}
Gunnar Carlsson.
\newblock Topology and data.
\newblock {\em Bulletin of the American Mathematical Society}, 46(2):255--308,
  2009.

\bibitem{crawley2015decomposition}
William Crawley-Boevey.
\newblock Decomposition of pointwise finite-dimensional persistence modules.
\newblock {\em Journal of Algebra and its Applications}, 14(05):1550066, 2015.

\bibitem{das2017delay}
Suddhasattwa Das and Dimitrios Giannakis.
\newblock Delay-coordinate maps and the spectra of koopman operators.
\newblock {\em arXiv preprint arXiv:1706.08544}, 2017.

\bibitem{de2011persistent}
Vin De~Silva, Dmitriy Morozov, and Mikael Vejdemo-Johansson.
\newblock Persistent cohomology and circular coordinates.
\newblock {\em Discrete \& Computational Geometry}, 45(4):737--759, 2011.

\bibitem{de2012topological}
Vin de~Silva, Primoz Skraba, and Mikael Vejdemo-Johansson.
\newblock Topological analysis of recurrent systems.
\newblock In {\em Workshop on Algebraic Topology and Machine Learning, NIPS},
  2012.

\bibitem{edelsbrunner2008persistent}
Herbert Edelsbrunner and John Harer.
\newblock Persistent homology-a survey.
\newblock {\em Contemporary mathematics}, 453:257--282, 2008.

\bibitem{edelsbrunner2010computational}
Herbert Edelsbrunner and John Harer.
\newblock {\em Computational topology: an introduction}.
\newblock American Mathematical Soc., 2010.

\bibitem{edelsbrunner1994three}
Herbert Edelsbrunner and Ernst~P M{\"u}cke.
\newblock Three-dimensional alpha shapes.
\newblock {\em ACM Transactions on Graphics (TOG)}, 13(1):43--72, 1994.

\bibitem{emrani2014real}
Saba Emrani, Harish Chintakunta, and Hamid Krim.
\newblock Real time detection of harmonic structure: A case for topological
  signal analysis.
\newblock In {\em Acoustics, Speech and Signal Processing (ICASSP), 2014 IEEE
  International Conference on}, pages 3445--3449. IEEE, 2014.

\bibitem{frank2010activity}
Jordan Frank, Shie Mannor, and Doina Precup.
\newblock Activity and gait recognition with time-delay embeddings.
\newblock In {\em AAAI}. Citeseer, 2010.

\bibitem{hitesh2018}
Hitesh Gakhar and Jose~A Perea.
\newblock Sliding window persistence of quasiperiodic functions.
\newblock 2018.

\bibitem{ghrist2014elementary}
Robert~W Ghrist.
\newblock {\em Elementary applied topology}.
\newblock Createspace, 2014.

\bibitem{glaz2017quasi}
Bryan Glaz, Igor Mezi{\'c}, Maria Fonoberova, and Sophie Loire.
\newblock Quasi-periodic intermittency in oscillating cylinder flow.
\newblock {\em Journal of Fluid Mechanics}, 828:680--707, 2017.

\bibitem{Hatcher}
Allen Hatcher.
\newblock {\em Algebraic Topology}.
\newblock Cambridge University Press, 2002.

\bibitem{herzel1994analysis}
Hanspeter Herzel, David Berry, Ingo~R Titze, and Marwa Saleh.
\newblock Analysis of vocal disorders with methods from nonlinear dynamics.
\newblock {\em Journal of Speech, Language, and Hearing Research},
  37(5):1008--1019, 1994.

\bibitem{kantz2004nonlinear}
Holger Kantz and Thomas Schreiber.
\newblock {\em Nonlinear time series analysis}, volume~7.
\newblock Cambridge university press, 2004.

\bibitem{khasawneh2016chatter}
Firas~A Khasawneh and Elizabeth Munch.
\newblock Chatter detection in turning using persistent homology.
\newblock {\em Mechanical Systems and Signal Processing}, 70:527--541, 2016.

\bibitem{Koopman315}
B.~O. Koopman.
\newblock Hamiltonian systems and transformation in hilbert space.
\newblock {\em Proceedings of the National Academy of Sciences},
  17(5):315--318, 1931.

\bibitem{Mezic}
Igor Mezi\'{c}.
\newblock Analysis of fluid flows via spectral properties of the koopman
  operator.

\bibitem{morrison2016diversity}
Katherine Morrison, Anda Degeratu, Vladimir Itskov, and Carina Curto.
\newblock Diversity of emergent dynamics in competitive threshold-linear
  networks: a preliminary report.
\newblock {\em arXiv preprint arXiv:1605.04463}, 2016.

\bibitem{nolte2010tangled}
David~D Nolte.
\newblock The tangled tale of phase space.
\newblock {\em Physics today}, 63(4):33--38, 2010.

\bibitem{perea2016persistent}
Jose~A Perea.
\newblock Persistent homology of toroidal sliding window embeddings.
\newblock In {\em Acoustics, Speech and Signal Processing (ICASSP), 2016 IEEE
  International Conference on}, pages 6435--6439. IEEE, 2016.

\bibitem{perea2018brief}
Jose~A Perea.
\newblock A brief history of persistence.
\newblock {\em arXiv preprint arXiv:1809.03624}, 2018.

\bibitem{perea2018multiscale}
Jose~A Perea.
\newblock Multiscale projective coordinates via persistent cohomology of sparse
  filtrations.
\newblock {\em Discrete \& Computational Geometry}, 59(1):175--225, 2018.

\bibitem{perea2018circular}
Jose~A Perea.
\newblock Sparse circular coordinates via principal $\mathbb{Z}$-bundles.
\newblock {\em arXiv preprint arXiv:1809.09269}, 2018.

\bibitem{perea2019topological}
Jose~A Perea.
\newblock Topological time series analysis.
\newblock {\em Notices of the American Mathematical Society}, 66(5), 2019.

\bibitem{perea2015sw1pers}
Jose~A Perea, Anastasia Deckard, Steve~B Haase, and John Harer.
\newblock Sw1pers: Sliding windows and 1-persistence scoring; discovering
  periodicity in gene expression time series data.
\newblock {\em BMC bioinformatics}, 16(1):257, 2015.

\bibitem{perea2015sliding}
Jose~A Perea and John Harer.
\newblock Sliding windows and persistence: An application of topological
  methods to signal analysis.
\newblock {\em Foundations of Computational Mathematics}, 15(3):799--838, 2015.

\bibitem{plesnik2014detection}
Emil Plesnik, Olga Malgina, Jurij~F Tasic, Saso Tomazic, and Matej Zajc.
\newblock Detection and delineation of the electrocardiogram qrs-complexes from
  phase portraits.
\newblock 2014.

\bibitem{serra2009cross}
Joan Serra, Xavier Serra, and Ralph~G Andrzejak.
\newblock Cross recurrence quantification for cover song identification.
\newblock {\em New Journal of Physics}, 11(9):093017, 2009.

\bibitem{stam2005nonlinear}
Cornelis~J Stam.
\newblock Nonlinear dynamical analysis of eeg and meg: review of an emerging
  field.
\newblock {\em Clinical Neurophysiology}, 116(10):2266--2301, 2005.

\bibitem{takens1981detecting}
Floris Takens et~al.
\newblock Detecting strange attractors in turbulence.
\newblock {\em Lecture notes in mathematics}, 898(1):366--381, 1981.

\bibitem{ripserpy}
Christopher Tralie, Nathaniel Saul, and Rann Barr-On.
\newblock Ripser.py: A lean persistent homology library for python.
\newblock {\em Journal of Open Source Software (JOSS)}, 2018.

\bibitem{tralie2017Dissertation}
Christopher~J Tralie.
\newblock {\em Geometric Multimedia Time Series}.
\newblock Duke ph.d. dissertation, Department of Electrical and Computer
  Engineering, Duke University, 2017.

\bibitem{traliemoebius}
Christopher~J Tralie and John Harer.
\newblock Moebius beats: The twisted spaces of sliding window audio novelty
  functions with rhythmic subdivisions.
\newblock In {\em 18th International Society for Music Information Retrieval
  (ISMIR), Late Breaking Session}, 2017.

\bibitem{tralie2017quasi}
Christopher~J. Tralie and Jose~A. Perea.
\newblock (quasi)periodicity quantification in video data, using topology.
\newblock {\em SIAM Journal on Imaging Sciences}, 11(2):1049--1077, 2018.

\bibitem{venkataraman2016persistent}
Vinay Venkataraman, Karthikeyan~Natesan Ramamurthy, and Pavan Turaga.
\newblock Persistent homology of attractors for action recognition.
\newblock In {\em Image Processing (ICIP), 2016 IEEE International Conference
  on}, pages 4150--4154. IEEE, 2016.

\bibitem{zorich2006flat}
Anton Zorich.
\newblock Flat surfaces.
\newblock {\em Frontiers in number theory, physics, and geometry I}, pages
  439--585, 2006.

\end{thebibliography}

    \end{document}